\documentclass[smallextended,envcountsame]{svjour3}
\smartqed
\usepackage{amsmath,amssymb}
\usepackage{graphicx}
\usepackage{psfrag}
\usepackage{txfonts}
\usepackage{xcolor}
\usepackage[all]{xy}
\usepackage{tikz}
\usetikzlibrary{intersections,calc}

\journalname{Japan J.\ Indust.\ Appl.\ Math.}

\numberwithin{equation}{section}

\newtheorem{miho}{Lemma~3${}^{new}$}

\newcommand{\R}{\mathbb R}
\newcommand{\bfx}{\mathbf x}

\newcommand{\PP}{\mathcal{P}}
\newcommand{\T}{\mathcal{T}}
\newcommand{\I}{\mathcal{I}}

\newcommand{\E}{\mathcal{F}}
\newcommand{\DG}{\mathcal{DG}}
\newcommand{\dd}{\mathrm{d}}
\newcommand{\tT}{\widetilde{T}}

\newcommand{\dsp}{\displaystyle}

\newcommand{\bnormal}{\mathbf n}
\newcommand{\asip}{a_h^{std}}
\newcommand{\asipnew}{a_{h}^{new}}
\newcommand{\jumpterm}{\left\{\frac{|f|}{|\widetilde{T}_f|}\right\}}
\newcommand{\revise}[1]{{\color{black} #1}}

\begin{document}

\title{A robust discontinuous Galerkin scheme on anisotropic meshes}

\author{Takahito Kashiwabara \and Takuya Tsuchiya}

\institute{Takahito Kashiwabara \at
           Graduate School of Mathematical Sciences, \\
           The University of Tokyo, Tokyo, Japan \\
           \email{\texttt{tkashiwa@ms.u-tokyo.ac.jp}} \and
           Takuya Tsuchiya \at
           Graduate School of Science and Engineering, \\
           Ehime University, Matsuyama, Japan \\
           \email{\texttt{tsuchiya@math.sci.ehime-u.ac.jp}}
}

\date{Received: date / Accepted: date}

\maketitle

\begin{abstract}
Discontinuous Galerkin (DG) methods are extensions of the usual
Galerkin finite element methods.
Although there are vast amount of studies on DG methods, most of them
have assumed shape-regularity conditions on meshes for both theoretical
error analysis and practical computations.
In this paper, we present a new
symmetric interior penalty DG scheme with a 
modified penalty term.  We show that, without imposing the
shape-regularity condition on the meshes, the new DG scheme inherits all
of the good properties of standard DG methods, and is thus
robust on anisotropic meshes.  Numerical experiments confirm
the theoretical error estimates obtained.
\end{abstract}
\keywords{discontinuous Galerkin method, symmetric interior penalty,
error estimation, anisotropic meshes}
\subclass{65N30, 65N50}

\noindent

\section{Introduction}
Discontinuous Galerkin (DG) methods are extensions of the usual 
(continuous) Galer-kin finite element methods.
The idea of introducing penalty terms in finite 
element methods originated from Nitsche \cite{Nitsche} and Babu\v{s}ka
\cite{Babuska}, while the idea of using discontinuous elements with
an interior penalty was introduced by Wheeler \cite{Wheeler}.  Later,
this approach was extended to the cases of nonlinear elliptic
and parabolic problems by Arnold \cite{Arnold}.   Since then,
the DG methods have developed in many directions.  For an account
of the history of DG methods for elliptic problems, readers are referred
to \cite[Section~2]{ABCM}.  Overall,
the mathematical theory of DG methods is well established
\cite{BrennerScott}, \revise{\cite{CDGH},} 
\cite{DiPietroErn}, \cite{ErnGuermond}, \cite{Riviere}.
In this paper, we consider the symmetric interior penalty (SIP) DG 
method, which is one of the most basic and well-known DG schemes.

The above mentioned papers and textbooks confirm that a
\textbf{shape-regularity condition} has always been imposed on meshes
for theoretical error analysis of DG methods.
The shape-regularity condition requires that elements in the
meshes must be neither very ``flat'' nor ``degenerated'' (see
Definition~\ref{def1}). If a mesh contains very flat elements, it is
said to be \textbf{anisotropic}.  The simple numerical experiment
described in Section~\ref{sec3.3} shows that the
standard SIP-DG method is not robust on anisotropic meshes, and 
that the shape-regularity condition is crucial for practical
computations.

The purpose of this paper is to introduce a new SIP-DG scheme that is
robust on anisotropic meshes.
To this end, we use the \textit{general} trace inequality to define a
new penalty term for the proposed SIP-DG scheme in Section~4.  In Section~5,
we show that the new scheme inherits all of the good properties
of standard SIP-DG methods.  That is, if the penalty
parameter is sufficiently large, the new SIP-DG scheme is
consistent, coercive, stable, and bounded on arbitrary (proper) meshes
(Lemma~$3^{new}$ and Lemma~\ref{lem8}). 
Those properties immediately yield error estimations of the new
SIP-DG scheme without imposing the shape-regularity condition (see
 Corollary~\ref{cor11}).  An immediate consequence is the
error estimate of order $\mathcal{O}(h^k)$ under
the maximum angle condition on meshes (see Corollary~\ref{cor12}). 
In Section~\ref{num-experi2}, we present the results of numerical
experiments to confirm the theoretical results obtained.  From the
results, we conclude that the newly presented SIP-DG scheme is robust on
anisotropic meshes.

\section{Preliminaries}
\subsection{The model problem}
Let $\Omega \subset \R^d$, ($d = 2, 3$) be a bounded polyhedral domain.
\revise{Let $L^2(\Omega)$, $H^1(\Omega)$, and $H_0^1(\Omega)$ be the 
usual Lebesgue and Sobolev spaces (see Section~\ref{functionspace}
for notation of their (semi)norms and inner products).}
We consider the following Poisson problem: find $u \in H^1(\Omega)$
such that
\begin{align}
   - \Delta u = \phi \quad \text{ in } \Omega, \qquad
   u = 0 \; \text{ on } \partial \Omega,
  \label{model-eq}
\end{align}
where $\phi \in L^2(\Omega)$ is a given function.
The weak form of the model problem is as follows:
\begin{align}
  a(u,v) := \int_\Omega \nabla u \cdot \nabla v \, \dd \bfx
  = (\phi, v)_\Omega, \quad \forall v \in H_0^1(\Omega).
   \label{bilinearform}
\end{align}
The model problem \eqref{model-eq} is said to satisfy
\textit{elliptic regularity} if there exists a positive constant
$C_{ell}$
such that the following \textit{a priori} estimate holds for the
exact solution $u$: 
\begin{align}
   \|u\|_{2,\Omega} \le C_{ell} \|\phi\|_{0,\Omega}, 
   \qquad \forall \phi \in L^2(\Omega).
    \label{elli-reg}
\end{align}
It is well known that if $\Omega$ is convex, then
the model problem \eqref{model-eq} satisfies elliptic
regularity \cite{Grisvard}.

\subsection{Meshes of $\Omega$}\label{sect:meshes}
Although DG methods allow elements with a variety of geometries,
we consider only simplicial elements in this paper.  Thus,
we suppose that the domain $\Omega$ is divided into a finite number
of triangles ($d=2$) or tetrahedrons ($d=3$), which are assumed to be
closed sets.  A mesh (or triangulation) of $\Omega$ is denoted by
$\T_h$.  That is, $\T_h$ is a finite set of triangles or tetrahedrons
 that has the following properties:
\begin{align*}
   \overline{\Omega} = \bigcup_{T \in \T_h} T, \qquad
   \revise{\operatorname{int} T_1}    \cap
    \revise{\operatorname{int} T_2} = \emptyset
   \;  \text{ if } \; T_1 \neq T_2,
\end{align*}
where \revise{$\operatorname{int} T_i$} is the interior of $T_i$.
For $T \in \T_h$, let $\bnormal_T$ be the unit outer normal vector
 on $\partial T$. 

In this paper, we assume that meshes are proper (or \textit{face-to-face}).
This means that, for $T_1, T_2 \in \T_h$, $T_1 \neq T_2$,
\begin{align}
 \text{
 if $T_1 \cap T_2 \neq \emptyset$,
then $T_1 \cap T_2$ is a $r$-face of $T_i$ $(i = 1, 2)$
with $0 \le r \le d-1$.
  } \label{proper-mesh}
\end{align}
For a $d$-simplex $T$ and its facet $f$, their Lebesgue and Hausdorff
measures are denoted by $|T|$ and $|f|$, respectively.

For $T$, we define $h_T := \revise{\operatorname{diam} T}$.  If $d=2$, $R_T$ is
the circumradius of $T$.  Note that $R_T = l_1l_2h_T/(4|T|)$, where
$l_1 \le l_2 \le h_T$ are the lengths of the edges of $T$. 
If $d=3$, $R_T$ is defined by
\begin{align}
  R_T := \frac{l_1l_2}{|T|}h_T^2,
  \label{def-RT}
\end{align}
where $l_1 \le l_2 \le \cdots \le h_T$ are the lengths of the edges of
$T$.  As has been seen in
\cite{IshiKobaTsuchi1,KobayashiTsuchiya1,KobayashiTsuchiya2,%
KobayashiTsuchiya3,KobayashiTsuchiya5,KobayashiTsuchiya6},
$R_T$ is an important parameter in measuring
interpolation errors on $d$-simplices.  For example, the errors of
Lagrange interpolation on $T$ are bounded in terms of $R_T$, as
presented by \eqref{L-est1} and \eqref{L-est2}. 

\vspace{3mm}
\noindent
\textit{Remark.}
 The \textit{best} definition of $R_T$ for tetrahedrons remains
an open problem.  A simple example immediately rejects the idea
that $R_T$ for a tetrahedron might be the radius of its circumsphere
\cite[p.~3]{KobayashiTsuchiya5}.    The definition 
\eqref{def-RT} is given in \cite{IshiKobaTsuchi1}. In
\cite{IshiKobaTsuchi1}, another parameter, denoted by $H_T$,
is introduced, and it is shown that $R_T$ and $H_T$ are equivalent
\revise{(see also \cite{KobayashiTsuchiya6})}. 
In \cite{KobayashiTsuchiya5}, the \textit{projected circumradius}
of $T$ is defined for a tetrahedrons.  It is conjectured that
$R_T$ and the projected circumradius are equivalent.

\vspace{3mm}
Let $\E_h := \{f \mid f \text{ is a facet of } T \in \T_h\}$.
That is, $\E_h$ is the set of all edges ($d=2$) or faces ($d=3$)
in $\T_h$. Then, let 
$\E_h^\partial := \{f \in \E_h \mid f \subset \partial\Omega\}$ and
$\E_h^o := \E_h \backslash \E_h^\partial$.

Suppose that we consider a (possibly infinite) family of meshes
$\{\T_h\}_{h > 0}$ with $h \to 0$.
\begin{definition}\label{def1}
(1) The family of meshes $\{\T_h\}_{h > 0}$ is said to satisfy 
the \textbf{shape-regularity condition} with respect to $\sigma$
if there exists a positive constant $\sigma$ such that
\begin{align*}
   \frac{h_T}{\rho_T} \le \sigma, \qquad 
   \forall T \in \T_h, \quad \forall h > 0,
\end{align*}
where 
 $\rho_T$ is the diameter of the inscribed ball of $T$.  We call
 $\sigma$ the \textit{shape-regular constant} in this paper.

(2) The family of meshes $\{\T_h\}_{h > 0}$ is said to satisfy 
the \textbf{maximum angle condition} with respect to a constant
$C_{max}$ $(\pi/3 < C_{max} < \pi)$
if the following hold for all $T \in \T_h$ and for all $\T_h$:
\begin{itemize}
 \item An arbitrary inner angle $\theta$ of $T$ is $\theta \le C_{max}$
       ($d=2$), or
 \item An arbitrary inner angle $\theta$ of any facet of $T$ is
$\theta \le C_{max}$, and an arbitrary dihedral angle $\eta$ of $T$ is
$\eta \le C_{max}$ ($d=3$).
\end{itemize}

(3) The family of meshes $\{\T_h\}_{h > 0}$ is said to satisfy 
the \textbf{circumradius condition} if the family satisfies
\begin{align*}
  \lim_{h \to 0}\max_{T \in \T_h} R_T = 0.
\end{align*}

\vspace{3mm}
In this paper, we always assume that the family $\{\T_h\}_{h > 0}$ of
meshes satisfies the circumradius condition.
\end{definition}

\noindent
\textit{Remark.}  If we deal with only a finite family of meshes
 $\{\T_h\}$ and we take a sufficiently large $\sigma > 0$, then
the family satisfies the shape-regularity condition because $\{\T_h\}$
contains only a finite number of $d$-simplices.  However,
if $\sigma$ is too large (say, $\sigma \ge 10$), we commonly say that
$\{\T_h\}$ is \textit{not} shape-regular.  In such a case, as
mentioned in Section~1,  $\{\T_h\}$ is said to be \textbf{anisotropic}. 

\subsection{Function spaces}\label{functionspace}
Let $k \ge 1$ be a positive integer.  We use the notation $L^2(\Omega)$,
$L^2(f)$ $(f \in \E_h)$, $H^k(\Omega)$,
$H_0^1(\Omega)$ for the usual Lebesgue and Sobolev spaces.
We denote their norms and
semi-norms by, for example,  $\|\cdot\|_{0,f}$, $|\cdot|_{1,\Omega}$,
and their inner products by $(\cdot,\cdot)_f$, $(\cdot,\cdot)_\Omega$.
Let $\PP_k(K)$ be the set of all polynomials defined on
the closed set $K$ with degree less than or equal to a positive
integer $k$.

As usual,  we introduce the broken Sobolev and polynomial spaces by
\begin{align*}
   \dsp H^k(\T_h) &:= \left\{v \in L^2(\Omega) \bigm|
    v|_{T} \in H^k(T), \forall T \in \T_h \right\}, \\
   \PP_k(\T_h) & := \left\{v \in L^2(\Omega) \bigm|
    v|_{T} \in \PP_k(T), \forall T \in \T_h \right\}.
\end{align*}
Finally, define
 $V := H_0^1(\Omega)$, $V_* := H_0^1(\Omega) \cap H^2(\Omega)$,
$V_h := \PP_k(\T_h)$,  and $V_{*h} := V_* + V_h$.

\subsection{Jump and mean of functions on $f \in \E_h$}
For each interior facet $f \in \E_h^o$, there are two simplices
that share $f$.  We number those simplices as $T_{f}^{i} \in \T_h$
 $(i=1,2)$ and fix the numbering once the mesh is obtained.
Then, we have $f = T_{f}^{1} \cap T_{f}^{2}$.
\revise{
Recalling that $\bnormal_T$ is the unit outer nomal vector on
$\partial T$, define $\bnormal_f := \bnormal_{T_{f}^{1}}$.}
%
\, For $v \in H^2(\T_h)$, we set $v_1 := v|_{T_{f}^{1}}$ and
$v_2 := v|_{T_{f}^{2}}$.  We denote the trace operator on $T_f^i$
to $f$ by $\gamma_f^i$ ($i=1,2$).  Define
\begin{align*}
   [v] := \gamma_f^1(v_1) - \gamma_f^2(v_2),
   \qquad  \{v\} := \frac{1}{2}\left(
   \gamma_f^1(v_1) + \gamma_f^2(v_2)\right).
\end{align*}
The jump $[\nabla v]_f$ and average $\{\nabla v\}_f$ are
defined in a similar way. 

If $g \in \E_h^\partial$, then $g \subset \partial\Omega$.
Let $g \subset \partial T_g$ with $T_g \in \T_h$.  Then, define
\begin{align*}
   [v] = \{v\} := \gamma_g(v|_{T_{g}}).
\end{align*}


\section{Standard SIP-DG scheme}
\subsection{Definition of SIP-DG scheme}
In the SIP-DG scheme, the bilinear form $a(u,v)$ in 
\eqref{bilinearform} is discretized as
\begin{align*}
  \asip(v,w_h) & := \sum_{T \in \T_h} \int_T \nabla v \cdot 
   \nabla w_h \dd \bfx - \sum_{f \in \E_h} \int_{f} [w_h] 
 \{\nabla v\} \cdot \bnormal_f \dd s  \\
 & \quad - \sum_{f \in \E_h} \int_{f} 
 [v] \{\nabla w_h\} \cdot \bnormal_f \dd s +
     \eta \sum_{f \in \E_h} \frac{1}{h_f} \int_f [v][w_h]\dd s
\end{align*}
for $v \in V_{*h}$ and $w_h \in V_h$, where 
$h_f := \revise{\operatorname{diam} f}$.
Here, $\eta$ is a \textbf{penalty parameter} that is taken
to be sufficiently large.  To make the notation concise, we set
\begin{align}
  a_h^{(0)}(v,w_h) & := \sum_{T \in \T_h} \int_T \nabla v \cdot 
   \nabla w_h \, \dd \bfx \notag \\
  J_h(v,w_h) & := \sum_{f \in \E_h} \int_{f} [w_h] 
 \{\nabla v\} \cdot \bnormal_f \, \dd s + \sum_{f \in \E_h} \int_{f} 
 [v] \{\nabla w_h\} \cdot \bnormal_f \, \dd s, \notag \\
  P_h^{std}(v,w_h) & :=  \eta \sum_{f \in \E_h}
   \frac{1}{h_f} \int_f [v][w_h] \, \dd s. \label{penalty-term}
\end{align}
The terms $J_h(v,w_h)$ and $P_h^{std}(v,w_h)$ are called the
\textit{jump term} and \textit{penalty term}, respectively.
The discretized bilinear form $\asip(v,w_h)$ is written as
\begin{align*}
   \asip(v,w_h) = a_h^{(0)}(v,w_h) - J_h(v,w_h) + P_h^{std}(v,w_h).
\end{align*}

\begin{definition}
The SIP-DG 
scheme for the model problem is defined as follows: find
$u_h \in V_h$ such that
\begin{align}
   \asip(u_h, v_h) = (\phi, v_h)_\Omega, \quad \forall v_h \in V_h.
   \label{sip-dg}
\end{align}
\end{definition}

\subsection{Properties of SIP-DG scheme and error analysis}
In the following, we summarize the properties of the SIP-DG method.
For their proofs, readers are referred to the standard textbooks
\cite{BrennerScott}, \cite{DiPietroErn}, \cite{ErnGuermond}, 
\cite{Riviere}.  In this section, we mainly refer to \cite{DiPietroErn}.

\begin{lemma}[Consistency]\cite[Lemma~4.8]{DiPietroErn}
\label{thm-consist}
\quad
The exact solution $u \in V_*$ of the model problem
\eqref{model-eq} is consistent:
\begin{align*}
  \asip(u,v_h) = (\phi, v_h)_\Omega, \quad \forall v_h \in V_h.
\end{align*}
Therefore, the solution $u_h \in V_h$ of the SIP-DG method 
\eqref{sip-dg} satisfies the Galerkin orthogonality:
\begin{align*}
  \asip(u - u_h,v_h) = 0, \quad \forall v_h \in V_h.
\end{align*}
\end{lemma}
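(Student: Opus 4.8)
The plan is to exploit the additional regularity $u\in V_*=H_0^1(\Omega)\cap H^2(\Omega)$: it forces every term of $\asip(u,v_h)$ that carries a jump of $u$ to vanish, and then an element-wise integration by parts on the volume term reproduces $(\phi,v_h)_\Omega$. Note that no quantitative (hence no shape-regularity-dependent) bound is used anywhere; all that is needed is that each $T\in\T_h$ is a Lipschitz domain, so that $H^1(T)$-traces on $\partial T$ exist in $L^2(\partial T)$ and Green's formula is available.

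First I would record what $u\in V_*$ gives on facets. Since $u|_T\in H^2(T)$ for every $T$, both $u$ and $\nabla u$ admit traces on each facet; across an interior facet $f\in\E_h^o$ these traces agree, so $[u]_f=0$ and $[\nabla u]_f=0$, i.e. $\{\nabla u\}_f$ equals the common one-sided trace $\gamma_f^1(\nabla u)=\gamma_f^2(\nabla u)$. On a boundary facet $g\in\E_h^\partial$, $u\in H_0^1(\Omega)$ yields $\gamma_g(u)=0$, hence $[u]_g=0$ as well. Writing $\asip(u,v_h)=a_h^{(0)}(u,v_h)-J_h(u,v_h)+P_h^{std}(u,v_h)$, we see $P_h^{std}(u,v_h)=0$ and the $[u]$-half of $J_h(u,v_h)$ vanishes, leaving
\begin{align*}
  \asip(u,v_h)=\sum_{T\in\T_h}\int_T\nabla u\cdot\nabla v_h\,\dd\bfx
     -\sum_{f\in\E_h}\int_f[v_h]\,\{\nabla u\}\cdot\bnormal_f\,\dd s .
\end{align*}

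Next I would integrate by parts on each $T$ (legitimate since $u|_T\in H^2(T)$ and $v_h|_T$ is polynomial):
\[
 \int_T\nabla u\cdot\nabla v_h\,\dd\bfx
  =-\int_T(\Delta u)\,v_h\,\dd\bfx+\int_{\partial T}(\nabla u\cdot\bnormal_T)\,v_h\,\dd s .
\]
Summing over $T\in\T_h$ and using $-\Delta u=\phi$ from \eqref{model-eq} gives $a_h^{(0)}(u,v_h)=(\phi,v_h)_\Omega+\sum_{T\in\T_h}\int_{\partial T}(\nabla u\cdot\bnormal_T)\,v_h\,\dd s$. The one step that needs care is to reassemble this last sum facet by facet. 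For $f=T_f^1\cap T_f^2\in\E_h^o$, the contributions from the two neighbouring elements combine — using $\bnormal_{T_f^2}=-\bnormal_{T_f^1}=-\bnormal_f$ and the absence of a jump in $\nabla u$ — to $\int_f(\{\nabla u\}\cdot\bnormal_f)\,[v_h]\,\dd s$; for $g\in\E_h^\partial$ the single contribution equals $\int_g(\nabla u\cdot\bnormal_f)\,v_h\,\dd s=\int_g(\{\nabla u\}\cdot\bnormal_f)\,[v_h]\,\dd s$ because there $[v_h]=v_h$ and $\bnormal_f$ is the outward normal. Hence $\sum_{T}\int_{\partial T}(\nabla u\cdot\bnormal_T)v_h\,\dd s=\sum_{f\in\E_h}\int_f[v_h]\,\{\nabla u\}\cdot\bnormal_f\,\dd s$, which is precisely the facet sum left over in the previous display; substituting, the two facet sums cancel and $\asip(u,v_h)=(\phi,v_h)_\Omega$.

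Finally, Galerkin orthogonality follows by linearity: for any $v_h\in V_h$, $\asip(u-u_h,v_h)=\asip(u,v_h)-\asip(u_h,v_h)=(\phi,v_h)_\Omega-(\phi,v_h)_\Omega=0$ by the consistency just proved and the definition \eqref{sip-dg}. I expect the only mildly delicate point to be the facet-wise bookkeeping of the boundary integrals together with the sign convention for $\bnormal_f$ relative to the fixed numbering $T_f^1,T_f^2$; everything else is routine, and in particular the argument is completely insensitive to the geometry of the simplices.
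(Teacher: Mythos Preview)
Your argument is correct and is exactly the standard consistency proof for SIP-DG; the paper does not give its own proof of this lemma but simply cites \cite[Lemma~4.8]{DiPietroErn}, whose argument is the one you have reproduced.
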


We define the norms associated with the bilinear form $\asip$ as:
\begin{align*}
  \|v\|_{\DG} & := \left(a_h^{(0)}(v,v)
  + P_h^{std}(v,v) \right)^{1/2}, \qquad  v \in V_{*h}, \\
  \|v\|_{\DG*} & := \left(\|v\|_{\DG}^2 +
   \eta^{-1} \sum_{f \in \E_h} h_f
     \|\{\nabla v\}\cdot\bnormal_f\|_{0,f}^2 \right)^{1/2}.
\end{align*}

\begin{lemma}
Suppose that the mesh $\T_h$ is shape-regular with respect to a constant
$\sigma > 0$ and the penalty parameter $\eta$ is sufficiently large.
Then,
\begin{itemize}
 \item[$(1)$] $(\mathbf{Discrete\; coercivity})$
   \cite[Lemma~4.12]{DiPietroErn} \quad
   The bilinear form $\asip$ is coercive in $V_h$ with
  respect to the norm $\|\cdot\|_{\DG}$:
 \begin{align*}
    \asip(w_h,w_h) \ge \frac{1}{2} \|w_h\|_{\DG}^2, \qquad
     \forall w_h \in V_h.
 \end{align*}
 \item[$(2)$] $(\mathbf{Discrete\; stability})$
 The following inequality holds:
\begin{align*}
 \frac{1}{2}\|v_h\|_{\DG} \le \sup_{w_h \in V_h} 
   \frac{\asip(v_h,w_h)}{\|w_h\|_{\DG}}, \qquad
   \forall v_h \in V_h.
\end{align*}
\item[$(3)$] $(\mathbf{Boundedness})$
  \cite[Lemma~4.16]{DiPietroErn} \quad
The following inequality holds:
\begin{align*}
 \asip(v,w_h) \le C \|v\|_{\DG*}\|w_h\|_{\DG}, 
  \qquad \forall (v,w_h) \in V_{*h}\times V_h,
\end{align*} 
where the constant $C := C(\eta,\sigma)$ is independent of $h$.
\end{itemize}
\end{lemma}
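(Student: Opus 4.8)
The plan is to deduce all three properties from two ingredients: the Cauchy--Schwarz inequality applied to the volume part $a_h^{(0)}$ and the penalty part $P_h^{std}$, and a \emph{discrete trace inequality}, which on a shape-regular mesh (via scaling to a reference simplex, affine equivalence, and $h_f\simeq h_T$) gives, for every $w_h\in V_h$, every $T\in\T_h$, and every facet $f\subset\partial T$,
\begin{align*}
  h_f\,\|\nabla w_h\cdot\bnormal_f\|_{0,f}^2 \le C_{tr}(\sigma)\,\|\nabla w_h\|_{0,T}^2 .
\end{align*}
Summing over the (at most two) simplices sharing $f$ and then over $f\in\E_h$, and using that each $T$ has $d+1$ facets, one gets $\sum_{f\in\E_h}h_f\|\{\nabla w_h\}\cdot\bnormal_f\|_{0,f}^2\le(d+1)\,C_{tr}(\sigma)\,a_h^{(0)}(w_h,w_h)$.

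For (1), I would expand $\asip(w_h,w_h)=\|w_h\|_{\DG}^2-2\sum_{f\in\E_h}\int_f[w_h]\,\{\nabla w_h\}\cdot\bnormal_f\,\dd s$ and estimate the cross term facet-by-facet by Cauchy--Schwarz after inserting the weights $h_f^{1/2}$ and $h_f^{-1/2}$; a weighted Young inequality combined with the trace estimate above then yields
\begin{align*}
  2\Bigl|\sum_{f\in\E_h}\int_f[w_h]\{\nabla w_h\}\cdot\bnormal_f\,\dd s\Bigr|
  \le \tfrac12\,a_h^{(0)}(w_h,w_h)+2(d+1)C_{tr}(\sigma)\,\eta^{-1}P_h^{std}(w_h,w_h).
\end{align*}
Choosing $\eta\ge 4(d+1)C_{tr}(\sigma)$ bounds the right-hand side by $\tfrac12\|w_h\|_{\DG}^2$, whence $\asip(w_h,w_h)\ge\tfrac12\|w_h\|_{\DG}^2$; this is precisely where the requirement ``$\eta$ sufficiently large'' acquires a concrete, $\sigma$-dependent value. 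Property (2) is then immediate: taking $w_h=v_h$ in the supremum and using (1), $\sup_{w_h\in V_h}\asip(v_h,w_h)/\|w_h\|_{\DG}\ge\asip(v_h,v_h)/\|v_h\|_{\DG}\ge\tfrac12\|v_h\|_{\DG}$, the case $\|v_h\|_{\DG}=0$ being trivial.

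For (3), I would bound the three pieces of $\asip(v,w_h)=a_h^{(0)}(v,w_h)-J_h(v,w_h)+P_h^{std}(v,w_h)$ separately. Cauchy--Schwarz gives $a_h^{(0)}(v,w_h)\le\|v\|_{\DG}\|w_h\|_{\DG}$ and $P_h^{std}(v,w_h)\le\|v\|_{\DG}\|w_h\|_{\DG}$. For the jump term, write each summand as a product of $h_f^{\pm1/2}$-weighted factors and apply Cauchy--Schwarz over $f\in\E_h$: in $\sum_f\int_f[w_h]\{\nabla v\}\cdot\bnormal_f\,\dd s$ the factor $\bigl(\sum_f h_f^{-1}\|[w_h]\|_{0,f}^2\bigr)^{1/2}\le\eta^{-1/2}\|w_h\|_{\DG}$ pairs with $\bigl(\sum_f h_f\|\{\nabla v\}\cdot\bnormal_f\|_{0,f}^2\bigr)^{1/2}\le\eta^{1/2}\|v\|_{\DG*}$ (by the definition of $\|\cdot\|_{\DG*}$), the $\eta$-powers cancelling; in $\sum_f\int_f[v]\{\nabla w_h\}\cdot\bnormal_f\,\dd s$ the same is done, except that the factor $\bigl(\sum_f h_f\|\{\nabla w_h\}\cdot\bnormal_f\|_{0,f}^2\bigr)^{1/2}$ is now dominated by $C(\sigma)\|w_h\|_{\DG}$ through the discrete trace inequality applied to the polynomial $w_h$, while $\bigl(\sum_f h_f^{-1}\|[v]\|_{0,f}^2\bigr)^{1/2}\le\eta^{-1/2}\|v\|_{\DG}$. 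Collecting the estimates gives $\asip(v,w_h)\le C(\eta,\sigma)\|v\|_{\DG*}\|w_h\|_{\DG}$.

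The main obstacle, and in fact the whole motivation of the paper, is the discrete trace inequality: its constant $C_{tr}(\sigma)$ genuinely deteriorates as simplices become flat, and it is used in an essential way both to pin down the threshold for $\eta$ in (1) and to handle the $\{\nabla w_h\}$ factor in (3). Obtaining coercivity, stability, and boundedness with constants that do \emph{not} degenerate with the shape-regularity of the mesh is exactly what the modified penalty term of Sections~4--5 is designed to accomplish.
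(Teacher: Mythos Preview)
Your proposal is correct and follows essentially the same approach as the paper. The paper does not actually prove this lemma---it defers to \cite{DiPietroErn}---but your argument mirrors, step for step, the proof the paper gives for the analogous Lemma~\ref{lem8} (the ``new'' scheme): trace inequality plus Cauchy--Schwarz and Young to bound $J_h(w_h,w_h)$ and obtain coercivity for $\eta$ large; stability by taking $w_h=v_h$; boundedness by Cauchy--Schwarz on $a_h^{(0)}$ and the penalty, the $\{\nabla v\}$-jump handled via the definition of $\|\cdot\|_{\DG*}$ and the $\{\nabla w_h\}$-jump via the discrete trace inequality on polynomials. Your closing remark about $C_{tr}(\sigma)$ degenerating on anisotropic meshes also correctly identifies the paper's motivation.
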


\begin{theorem}\label{error-est-old}
\cite[Theorem~4.17]{DiPietroErn} \quad
Suppose that the mesh $\T_h$ is shape-regular with respect to a constant
$\sigma > 0$ and the penalty parameter $\eta$ is sufficiently large.
Then, there exists a unique SIP-DG solution $u_h \in V_h$ of 
\eqref{sip-dg}, and the following error estimate holds:
\begin{align*}
   \|u - u_h\|_{\DG} \le C \inf_{y_h \in V_h} \|u - y_h\|_{\DG*},
\end{align*}
where the constant $C$ depends only on the penalty parameter $\eta$
and $\sigma$.
\end{theorem}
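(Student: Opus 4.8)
The plan is to run the classical C\'ea--Strang argument: combine the Galerkin orthogonality of Lemma~\ref{thm-consist} with the discrete coercivity and boundedness of $\asip$ stated above. Throughout, note that the exact solution satisfies $u \in V_* \subset V_{*h}$, so Lemma~\ref{thm-consist} is applicable.

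First I would settle existence and uniqueness. Since $V_h = \PP_k(\T_h)$ is finite-dimensional, it suffices to prove injectivity of the linear map associated with \eqref{sip-dg}. If $u_h$ and $u_h'$ both solve \eqref{sip-dg} and $w_h := u_h - u_h'$, then $\asip(w_h, v_h) = 0$ for all $v_h \in V_h$; choosing $v_h = w_h$ and invoking discrete coercivity gives $\|w_h\|_{\DG} = 0$. Since $\|\cdot\|_{\DG}$ is a genuine norm on $V_h$ --- vanishing of $a_h^{(0)}(v,v)$ forces $v$ to be constant on each element, and vanishing of $P_h^{std}(v,v)$ forces every jump, in particular each boundary trace $\gamma_g(v|_{T_g})$, to vanish, whence $v \equiv 0$ --- we conclude $w_h = 0$. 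Injectivity in finite dimensions then yields a unique solution $u_h \in V_h$.

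Next I would derive the error bound. Fix an arbitrary $y_h \in V_h$ and set $w_h := u_h - y_h \in V_h$. Applying discrete coercivity to $w_h$, splitting $w_h = (u_h - u) + (u - y_h)$, and using the Galerkin orthogonality $\asip(u - u_h, w_h) = 0$, one gets
\begin{align*}
  \tfrac12 \|w_h\|_{\DG}^2 \le \asip(w_h, w_h)
   = \asip(u - y_h, w_h) + \asip(u_h - u, w_h)
   = \asip(u - y_h, w_h).
\end{align*}
Since $(u - y_h, w_h) \in V_{*h} \times V_h$, boundedness gives $\asip(u - y_h, w_h) \le C\|u - y_h\|_{\DG*}\|w_h\|_{\DG}$, hence $\|w_h\|_{\DG} \le 2C\|u - y_h\|_{\DG*}$ (the case $w_h = 0$ being trivial). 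Combining this with the triangle inequality and the obvious bound $\|v\|_{\DG} \le \|v\|_{\DG*}$,
\begin{align*}
  \|u - u_h\|_{\DG} \le \|u - y_h\|_{\DG} + \|w_h\|_{\DG}
   \le (1 + 2C)\|u - y_h\|_{\DG*},
\end{align*}
and passing to the infimum over $y_h \in V_h$ concludes the proof; the final constant depends only on $\eta$ and $\sigma$, through $C$.

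I do not expect a real obstacle, since the two substantive ingredients --- discrete coercivity and boundedness with constants independent of $h$ --- have already been reduced to the cited results of \cite{DiPietroErn}, where shape-regularity enters through the discrete trace inequality. The only points demanding a little care are checking that $\|\cdot\|_{\DG}$ separates points on $V_h$ (needed for the uniqueness step) and remembering that consistency, and hence the whole argument, tacitly requires $u \in H^2(\Omega)$.
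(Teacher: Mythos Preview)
Your argument is correct and is precisely the classical C\'ea--Strang route the paper follows. The paper does not reprove Theorem~\ref{error-est-old} itself (it is quoted from \cite{DiPietroErn}), but its own proof of the analogous Theorem~\ref{thm9} proceeds exactly as you do: Galerkin orthogonality from consistency, coercivity applied to $u_h - y_h$ (phrased there via the discrete stability inequality, which is the same thing), boundedness to pass to $\|u-y_h\|_{\DG*}$, and the triangle inequality to absorb $\|u-y_h\|_{\DG}\le\|u-y_h\|_{\DG*}$, yielding the constant $1+2C$ before taking the infimum.
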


\begin{corollary}
\cite[Corollary~4.18]{DiPietroErn} \quad
Suppose that the assumptions of Theorem~\ref{error-est-old} hold
and that the exact solution $u$ of the model problem
\eqref{model-eq} belongs to $H^2(\Omega)$.  Then, we have the
following error estimate:
\begin{align*}
   \|u-u_h\|_{\DG} \le C h |u|_{2,\Omega},
\end{align*}
where the constant $C$ depends on $\eta$ and $\sigma$, but
is independent of $h$.
\end{corollary}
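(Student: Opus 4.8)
The plan is to derive the estimate from the quasi-optimality bound of Theorem~\ref{error-est-old} by exhibiting a convenient candidate $y_h \in V_h$ and estimating $\|u - y_h\|_{\DG*}$. Since $d \le 3$, the embedding $H^2(\Omega) \hookrightarrow C^0(\overline{\Omega})$ is available, so the Lagrange interpolant $I_h u \in V_h = \PP_k(\T_h)$ of $u$ is well defined; I take $y_h = I_h u$. Because $\inf_{y_h \in V_h}\|u - y_h\|_{\DG*} \le \|u - I_h u\|_{\DG*}$, it suffices to prove $\|u - I_h u\|_{\DG*} \le C h\,|u|_{2,\Omega}$ with $C = C(\sigma)$, and then substitute into Theorem~\ref{error-est-old}.

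Next I would bound the three contributions to $\|u - I_h u\|_{\DG*}^2$ separately. (i) For $a_h^{(0)}(u - I_h u, u - I_h u) = \sum_{T \in \T_h} |u - I_h u|_{1,T}^2$, the classical interpolation estimate on a shape-regular simplex gives $|u - I_h u|_{1,T} \le C h_T\,|u|_{2,T}$ (valid for every $k \ge 1$ since $\PP_1 \subset \PP_k$), whence this term is $\le C h^2\,|u|_{2,\Omega}^2$. (ii) For the penalty part $P_h^{std}(u - I_h u, u - I_h u) = \eta \sum_{f \in \E_h} h_f^{-1}\|[u - I_h u]\|_{0,f}^2$, I observe that all jumps vanish: $u$ is continuous so $[u]_f = 0$ on interior facets, and $I_h u$ is continuous across faces of a face-to-face simplicial mesh because the Lagrange nodes lying on a shared facet determine the trace there uniquely; on boundary facets $u = 0$ and the boundary nodal values of $I_h u$ vanish, so $[u - I_h u]_f = 0$ there too. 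Hence $P_h^{std}(u - I_h u, u - I_h u) = 0$. (iii) For the remaining term $\eta^{-1}\sum_{f \in \E_h} h_f\,\|\{\nabla(u - I_h u)\}\cdot\bnormal_f\|_{0,f}^2$, I would apply the scaled trace inequality on each simplex $T$ with facet $f$, namely $\|\nabla(u - I_h u)\|_{0,f}^2 \le C\big(h_T^{-1}|u - I_h u|_{1,T}^2 + h_T\,|u - I_h u|_{2,T}^2\big)$, together with the stability bound $|u - I_h u|_{2,T} \le C|u|_{2,T}$ and the estimate from (i); this yields $h_f\,\|\nabla(u - I_h u)\|_{0,f}^2 \le C h_T^2\,|u|_{2,T}^2$, and summing over facets gives $\le C h^2\,|u|_{2,\Omega}^2$.

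Combining (i)--(iii) gives $\|u - I_h u\|_{\DG*}^2 \le C h^2\,|u|_{2,\Omega}^2$, hence $\inf_{y_h \in V_h}\|u - y_h\|_{\DG*} \le C h\,|u|_{2,\Omega}$, and Theorem~\ref{error-est-old} completes the proof. The only place where genuine care is needed --- and the only place where the shape-regularity constant $\sigma$ actually enters --- is step (iii): both the scaled trace inequality and the $H^2$-stability of the Lagrange interpolant carry constants that degenerate on flat elements, which is precisely the deficiency that the modified penalty term of Sections~4--5 is designed to remove. The remainder is routine scaling.
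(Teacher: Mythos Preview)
Your proposal is correct. The paper does not actually give a proof of this corollary---it simply cites \cite[Corollary~4.18]{DiPietroErn}---but your argument is exactly the standard one and matches, step for step, the computation the paper carries out later for the \emph{new} scheme (the passage leading to Theorem~\ref{thm10}): choose the Lagrange interpolant so that all jumps vanish and $P_h^{std}(u-I_hu,u-I_hu)=0$, bound the broken $H^1$ part by the usual interpolation estimate, and control the normal-flux term via the continuous trace inequality \eqref{trace-in3} together with $|u-I_hu|_{2,T}\le C|u|_{2,T}$.

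One small sharpening of your closing remark: in the paper's viewpoint the trace inequality itself, written in the geometry-independent form \eqref{trace-in3} with $|f|^{1/2}/|T|^{1/2}$, carries a constant that does \emph{not} blow up on flat elements; the shape-regularity dependence in your step~(iii) enters only because you rewrote it with $h_T^{-1}$ and $h_T$ and because the Lagrange interpolation bounds in (i) and the $H^2$-stability in (iii) require it. This is precisely why the paper replaces $1/h_f$ by $\{|f|/|\tT_f|\}$ rather than modifying the trace inequality.
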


\subsection{Numerical experiments (part 1)}\label{sec3.3}
We consider a numerical experiment to examine how the shape-regular
constant $\sigma$ affects the practical computations involved in the
standard SIP-DG scheme.

Set $\Omega:=(0,1)\times(0,1)$ and
$\phi(x,y):= \pi^2\sin(\pi x)\sin(\pi y)$ in the model problem
\eqref{model-eq}.  Then, the exact solution is
$u(x,y)=\sin(\pi x)\sin(\pi y)/2$.
Let $n$ and $m$ be positive integers.  We divide the horizontal
and vertical sides of $\Omega$ into $n$ and $m$ equal segments,
respectively.  We then draw diagonal lines in each small rectangle to
define the mesh, as depicted in Figure~\ref{fig1}.
\begin{figure}[ht]
 \begin{minipage}[c]{6cm}
  \includegraphics[width=5cm]{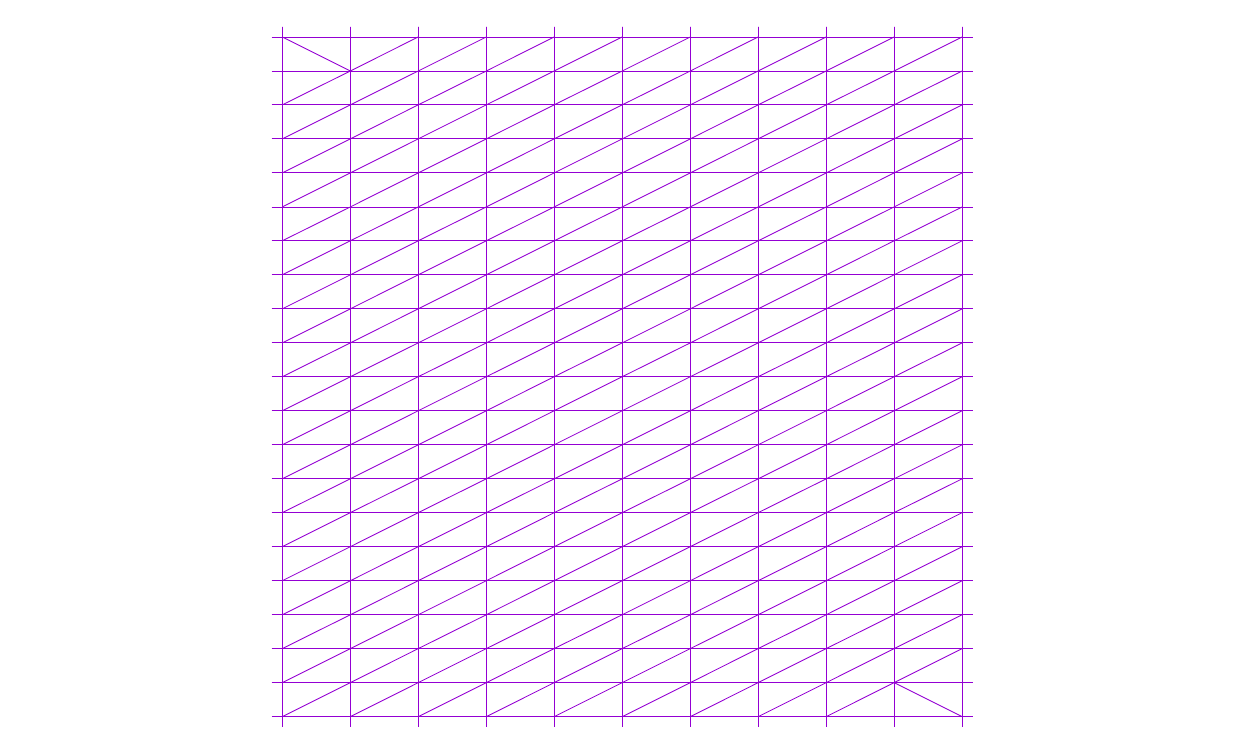} 
\end{minipage}
\caption{Mesh constructed in $\Omega$. $n=10$, $m=20$.}
\label{fig1}
\end{figure}

We fix $n=40$ and the penalty parameter $\eta = 10$.  We apply
the standard SIP-DG method to the model problem with various $m$.
The conjugate gradient method with the incomplete Cholesky decomposition
preconditioner (ICCG) is used for the linear solver. 
The successive over-relaxation (SOR) method is also used
occasionally to check whether the obtained $u_h$ is reasonable. The
results are summarized in Table~\ref{tab1}.
\begin{table}[htbp]
\caption{Errors produced by the SIP-DG method.} \label{tab1}
\begin{tabular}[t]{|c|c|c|c|c|c|c|}
\hline
 $m$ & $h$ & $R$ & $L^2$-error & $H^1(\T_h)$-error & $P_h^{std}$-error & DG-error \\
\hline 
40 & 3.536e-2 & 1.768e-2 & 2.991e-4 & 3.643e-2 &  
\revise{1.972e-2} & \revise{4.142e-2} \\
\hline
60 & 3.006e-2 & 1.503e-2 & 2.095e-4 & 3.067e-2 &
\revise{1.710e-2} & \revise{3.511e-2} \\
\hline 
80 & 2.795e-2 & 1.398e-2 & 1.706e-4 & 2.807e-2 &
\revise{1.645e-2} & \revise{3.253e-2} \\
\hline 
100 & 2.693e-2 & 1.346e-2 & 1.484e-4 & 2.663e-2 &
\revise{1.634e-2} & \revise{3.124e-2} \\
\hline 
120 & 2.637e-2 & 1.318e-2 & 1.334e-4 & 2.581e-2 &
\revise{1.648e-2} & \revise{3.062e-2} \\
\hline 
140 & 2.602e-2 & 1.301e-2 & 1.222e-4 & 2.564e-2 &
\revise{1.720e-2} & \revise{3.088e-2} \\
\hline 
160 & 2.578e-2 & 1.289e-2 & 1.147e-4 & 3.173e-2 &
\revise{2.620e-2} & \revise{4.115e-2} \\
\hline 
180 & 2.562e-2 & 1.281e-2 & 1.150e-4 & 3.330e-2 &
\revise{2.858e-2} & \revise{4.389e-2} \\
\hline 
200 & 2.550e-2 & 1.275e-2 & - & - & - & -\\
\hline 
\end{tabular}
\end{table}

Here, for $T \in \T_h$, 
$h=\revise{\operatorname{diam}{T}}$, $R$ is the circumradius of $T$, 
the ``$L^2$-error'' is $|u - u_h|_{L^2(\Omega)}$,
the ``$H^1(\T_h)$-error'' is $a_h^{(0)}(u - u_h,u-u_h)^{1/2}$,
the ``$P_h^{std}$-error'' is $P_h^{std}(u-u_h,u-u_h)^{1/2}$, and
the ``DG-error'' is $\|u-u_h\|_{\DG}$.
\revise{
We employ the 4-point Gauss quadrature of degree 3  on triangles to
compute those errors.
}
We see that the errors given by the  SIP-DG method decrease as
$m$ increases until $m = 100$, which is consistent with
the theoretical error estimates.  However,
the errors increase as $m$ increases from $m=120$.
The ICCG iterations do not converge for $m = 200$,
while the SOR iterations give almost the same results until $m=160$.
For $m=180, 200$, the SOR iterations converge quickly
but the obtained $u_h$ are not reasonable.

We also examined the case $m=400$.  In this case,
we required $\eta=30$ to obtain reasonable $u_h$.

\section{New penalty term and SIP-DG scheme}
From the numerical experiments described in the previous section,
we can conclude that the shape-regularity condition is crucial for 
the standard SIP-DG method with a fixed penalty parameter $\eta$.
It is natural to wonder why this is the case.

The most important term in the SIP-DG scheme is the penalty term
$P_h^{std}(v,w_h)$ defined by \eqref{penalty-term}.
The penalty term originates from the trace inequalities
\begin{align}
  \|v\|_{0,f} & \le  \frac{C_1^{tr}}{h_f^{1/2}} \|v\|_{0,T}, \quad 
  \|\nabla v\cdot\bnormal\|_{0,f} \le \frac{C_2^{tr}}{h_f^{1/2}} 
  \|\nabla v\|_{0,T}, \quad \forall v \in \PP_k(T),
    \label{trace-in2}
\end{align}
where $f$ is an arbitrary facet of $T \in \T_h$.  Note that the
constants $C_i^{tr}$ ($i=1,2$) strongly depend on the shape-regular
constant $\sigma$.

To avoid imposing the shape-regularity condition, we
adopt the \textit{general} trace inequalities
\begin{align}
     \|v\|_{0,f} & \le C_3^{tr} \frac{|f|^{1/2}}{|T|^{1/2}} 
     \|v\|_{0,T}, \quad
    \left\| \nabla v \cdot \bnormal \right\|_{0,f}
     \le C_4^{tr} \frac{|f|^{1/2}}{|T|^{1/2}} 
     \|\nabla v\|_{0,T}, \quad \forall v \in \PP_k(T),
    \label{trace-in1}
\end{align}
which are valid on an arbitrary $d$-simplex $T$.
Warburton and Hesthaven \cite{WarHest} presented explicit forms 
of the constants $C_i^{tr}$ ($i=3,4$) that are
independent of the geometry of $T$.
Note that \eqref{trace-in2} is a ``simplified'' version of
\eqref{trace-in1} under the shape-regularity condition.
We introduce a quantity on each $f \in \E_h$ below.

Let $f \in \E_h^o$.  Then, there exist $T_f^{1}$, $T_f^{2} \in \T_h$
such that $f = T_f^{1} \cap T_f^{2}$.   Let
$\tT_{f}^{i}$ be the $d$-simplex whose vertices are those of
$f$ and the barycenter of $T_{f}^{i}$ ($i=1,2$).  Then, define
\begin{align*}
   \jumpterm :=  \frac{|f|}{|\tT_{f}^{1}|}
     + \frac{|f|}{|\tT_{f}^{2}|}.
\end{align*}
If $g \in \E_h^\partial$, then $g \subset \partial\Omega$.
Let $g \subset \partial T_g$ with $T_g \in \T_h$ and $\tT_g$ be the
$d$-simplex whose vertices are those of $g$ and the barycenter of
$T_g$. Define
\begin{align*}
   \left\{\frac{|g|}{|\tT_g|}\right\} := \frac{|g|}{|\tT_g|}.
\end{align*}
Note that
\begin{align}
   \left(\bigcup_{g \in \E_h^\partial}\tT_g\right) \cup \left(
   \bigcup_{f \in \E_h^o} \tT_{f}^{1}\cup
    \tT_{f}^{2}\right) = \overline{\Omega}.
   \label{covering}
\end{align}
See Figure~\ref{fig2}.  We remark that the only information we need for
the simplex $\tT_f^i$ is its measure $|\tT_f^i| = |T_f^i|/(d+1)$.

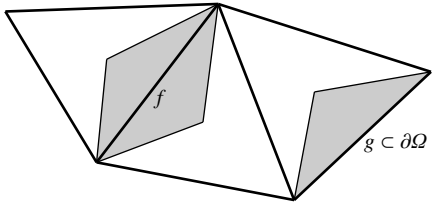
\begin{figure}[htbp]
\begin{tikzpicture}[line width = 1pt]
  \coordinate(A) at (-2.6,2);
  \coordinate(B) at (-1.4,0);
  \coordinate(C) at (1.2,-0.5);
  \coordinate(D) at (3,1.2);
  \coordinate(E) at (0.2,2.1);
  \coordinate(F) at ($(A)!0.5!(B)$);
  \coordinate(G) at ($(E)!0.6666666!(F)$);
  \coordinate(H) at ($(B)!0.5!(C)$);
  \coordinate(I) at ($(E)!0.6666666!(H)$);
  \coordinate(J) at ($(D)!0.5!(C)$);
  \coordinate(K) at ($(E)!0.6666666!(J)$);
 \fill [black!20] (B)--(E)--(G)--(B)--cycle;
 \fill [black!20] (B)--(E)--(I)--(B)--cycle;
 \fill [black!20] (C)--(D)--(K)--(C)--cycle;
  \draw[line width=1pt] (A) -> (B);
  \draw[line width=1pt] (B) -> (C);
  \draw[line width=1pt] (C) -> (D) node[pos=0.45,right]{$g \subset\partial\Omega$};
  \draw[line width=1pt] (D) -> (E);
  \draw[line width=1pt] (E) -> (A);
  \draw[line width=1pt] (E) -> (B) node[pos=0.6,right]{$f$};
  \draw[line width=1pt] (E) -> (C);
  \draw[line width=0.5pt] (E) -> (G);
  \draw[line width=0.5pt] (B) -> (G);
  \draw[line width=0.5pt] (E) -> (I);
  \draw[line width=0.5pt] (B) -> (I);
  \draw[line width=0.5pt] (C) -> (K);
  \draw[line width=0.5pt] (D) -> (K);
\end{tikzpicture}
\caption{Domains for the new penalty terms.} \label{fig2}
\end{figure}

With the quantity $\jumpterm$ defined for $f \in \E_h$, we (re)define
the penalty term and SIP-DG bilinear form $a_h^{std}$ as
\begin{align}
 P_h^{new}(v,w_h) & := \eta \sum_{f \in \E_h}
   \jumpterm \int_f [v] [w_h] \dd s, \label{newpenalty} \\
  \asipnew(v,w_h) & := a_h^{(0)}(v,w_h) - J_h(v,w_h) + P_h^{new}(v,w_h).
  \label{newa}
\end{align}
Note that, if a mesh satisfies the shape-regularity condition,
$\jumpterm \approx \frac{1}{h_f}$ for any facet $f \in \E_h$, and
$P_h^{new}$ becomes equivalent to the standard penalty term $P_h^{std}$.

\begin{definition}
The SIP-DG scheme for the model problem is redefined as follows: find
$u_h \in V_h$ such that
\begin{align}
   \asipnew(u_h, v_h) = (\phi, v_h)_\Omega, \quad \forall v_h \in V_h.
   \label{sip-dg-new}
\end{align}
\end{definition}

\noindent
\textit{Remark.} To see the meaning of the new penalty terms
\eqref{newpenalty}, let us consider a small part of the mesh
of Figure~\ref{fig1}, as shown in Figure~\ref{fig3}.

\newcommand{\mayu}{4}
\newcommand{\koyume}{0.8}
\begin{figure}[htbp]
\begin{tikzpicture}[line width = 1pt]
  \coordinate(A) at (-\mayu,0);
  \coordinate(B) at (0,0);
  \coordinate(C) at (0,\koyume);
  \coordinate(D) at (\mayu,\koyume);
  \coordinate(E) at (-\mayu,-\koyume);
  \draw[line width=1pt] (A) -> (B) node[pos=0.9,above]{$f_1$};
  \draw[line width=1pt] (B) -> (C) node[pos=0.75,right]{$f_2$};
  \draw[line width=1pt] (C) -> (A) node[pos=0.4,above]{$T$};
  \draw[line width=0.5pt] (E) -> (D);
  \draw[line width=0.5pt] (E) -> (A);
  \draw[line width=0.5pt] (C) -> (D);
\end{tikzpicture}
\caption{Part of the mesh.} \label{fig3}
\end{figure}
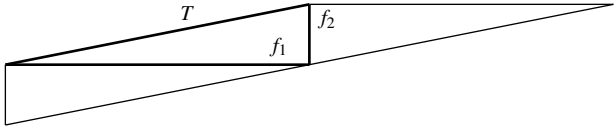
The mesh consists of congruent right triangles. Let $T$ be one of the
right triangles, and $f_1$, $f_2$ be its edges as depicted in
Figure~\ref{fig3}.  Set $h = |f_1|$.  Then, $|f_2| = \alpha h$, where
$0 < \alpha < 1$. Elementary geometry tells us that
$\left\{\frac{f_2}{|\tT_{f_2}|}\right\} = 12/h$,
\begin{align}
   \left\{\frac{|f_1|}{|\tT_{f_1}|}\right\}
   = \frac{12}{\alpha h}, \quad \text{ and }\quad
  \eta \left\{\frac{|f_1|}{|\tT_{f_1}|}\right\} \int_{f_1}
   [v][w_h] \dd s = \frac{12\eta}{\alpha h} \int_{f_1} [v][w_h] \dd s.
   \label{adaptive}
\end{align}
Note that if $T$ is becoming degenerated,
the coefficient $12\eta/\alpha$ will increase.
This means that we can interpret the coefficient $\eta\jumpterm$ as an
``improved'' version of the standard penalty term coefficient
$\frac{\eta}{h_f}$ with the ``adaptive'' parameter $\alpha$.
 $\square$

\section{Properties of the new bilinear form $\asipnew$ and
error estimates}
In this section, we show that, without imposing 
the shape-regularity condition, the new SIP-DG scheme
inherits all of the good properties from the standard SIP-DG scheme.

From the definitions, it is clear that Lemma~\ref{thm-consist} still
holds for the new $\asipnew$:
\begin{miho}[Consistency] \label{thm-consist2}
The exact solution $u \in V_*$ of the model problem
\eqref{model-eq} is consistent:
\begin{align*}
  \asipnew(u,v_h) = (\phi,v_h)_\Omega, \quad \forall v_h \in V_h.
\end{align*}
Therefore, the solution $u_h \in V_h$ of SIP-DG method 
\eqref{sip-dg} satisfies the Galerkin orthogonality:
\begin{align*}
  \asipnew(u - u_h,v_h) = 0, \quad \forall v_h \in V_h.
\end{align*}
\end{miho}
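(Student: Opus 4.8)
The plan is to mimic the classical proof of consistency for the standard SIP-DG method (see \cite[Lemma~4.8]{DiPietroErn} and Lemma~\ref{thm-consist}), the only new point being that the modified penalty term is irrelevant for this argument. The key observation is that the exact solution $u \in V_* = H_0^1(\Omega) \cap H^2(\Omega)$ is continuous across interior facets and vanishes on $\partial\Omega$; hence $[u] = 0$ on every $f \in \E_h$, so both $P_h^{new}(u,v_h)$ and the part of $J_h$ carrying the factor $[u]$ vanish identically. Consequently,
\begin{align*}
  \asipnew(u,v_h) = a_h^{(0)}(u,v_h) - \sum_{f \in \E_h}\int_f [v_h]\,\{\nabla u\}\cdot\bnormal_f \,\dd s, \qquad \forall v_h \in V_h .
\end{align*}

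Next I would integrate by parts on each element. Since $u|_T \in H^2(T)$ and $-\Delta u = \phi$,
\begin{align*}
  \int_T \nabla u \cdot \nabla v_h \, \dd \bfx = \int_T \phi\, v_h \, \dd \bfx + \int_{\partial T}(\nabla u \cdot \bnormal_T)\, v_h\, \dd s ,
\end{align*}
and summing over $T \in \T_h$ gives $a_h^{(0)}(u,v_h) = (\phi,v_h)_\Omega + \sum_{T\in\T_h}\int_{\partial T}(\nabla u\cdot\bnormal_T)\, v_h \, \dd s$. Because $u \in H^2(\Omega)$, the normal derivative has no jump across interior facets, i.e. $[\nabla u]_f \cdot \bnormal_f = 0$, so the standard ``DG magic formula'' collapses to
\begin{align*}
   \sum_{T \in \T_h}\int_{\partial T}(\nabla u \cdot \bnormal_T)\, v_h \, \dd s = \sum_{f \in \E_h}\int_f \{\nabla u\}\cdot\bnormal_f\,[v_h]\,\dd s ,
\end{align*}
where on $g \in \E_h^\partial$ the conventions $\{v_h\} = [v_h] = \gamma_g(v_h|_{T_g})$ and $\bnormal_f = \bnormal_{T_g}$ are used.

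Substituting this identity into the expression for $\asipnew(u,v_h)$ above, the two facet sums cancel exactly, leaving $\asipnew(u,v_h) = (\phi,v_h)_\Omega$ for all $v_h \in V_h$, which is the asserted consistency. The Galerkin orthogonality then follows at once by subtracting \eqref{sip-dg-new}: $\asipnew(u - u_h, v_h) = \asipnew(u,v_h) - \asipnew(u_h,v_h) = (\phi,v_h)_\Omega - (\phi,v_h)_\Omega = 0$.

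I do not expect a genuine obstacle here: the argument is word-for-word the shape-regular one, and the replacement of $P_h^{std}$ by $P_h^{new}$ changes nothing because both are annihilated by $[u]=0$; in particular no trace or inverse inequality, and hence no geometric constant, enters. The only point that deserves a careful sentence is the well-definedness of the facet integrals $\int_f \{\nabla u\}\cdot\bnormal_f[v_h]\,\dd s$ and of the elementwise integration by parts --- this is precisely why the hypothesis $u \in V_*$ (so that $\nabla u|_T \in H^1(T)^d$ has an $L^2$ trace on $\partial T$) is imposed, rather than merely $u \in V$.
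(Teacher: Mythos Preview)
Your argument is correct and is exactly the standard consistency proof the paper has in mind; indeed, the paper does not even spell it out but simply remarks that ``from the definitions, it is clear that Lemma~\ref{thm-consist} still holds for the new $\asipnew$,'' precisely because (as you observe) the only change from $\asip$ to $\asipnew$ is in the penalty term, which is annihilated by $[u]=0$.
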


We redefine the norms associated with the SIP-DG scheme:
\begin{align}
  \|v\|_{\DG}^{new} & := \left(a_h^{(0)}(v,v)
  + P_h^{new}(v,v) \right)^{1/2}, \qquad  v \in V_{*h}, \\
  \|v\|_{\DG*}^{new} & := \left(
   \revise{\left(\|v\|_{\DG}^{new}\right)^2} +
   \eta^{-1} \sum_{f \in \E_h} \jumpterm^{-1}
     \|\{\nabla v\}\cdot\bnormal_f\|_{0,f}^2 \right)^{1/2}.
\end{align}

The following lemma holds.  Its proof is quite similar to 
the standard proofs.  Here, we loosely follow the proofs given by
Di Pietro and Ern \cite{DiPietroErn}.
\begin{lemma}\label{lem8}
Suppose that the penalty parameter $\eta$ is sufficiently large. Then,
\begin{itemize}
 \item[$(1)$] $(\mathbf{Discrete\; coercivity})$
   The bilinear form $\asipnew$ is coercive in $V_h$ with
  respect to the norm $\|\cdot\|_{\DG}^{new}$:
\vspace{-3mm}
 \begin{align*}
    \asipnew(w_h,w_h) \ge \frac{1}{2}
  \left(\|w_h\|_{\DG}^{new}\right)^2,
    \qquad \forall w_h \in V_h.
 \end{align*}
 \item[$(2)$] $(\mathbf{Discrete\; stability})$
 The following inequality holds:
\vspace{-2mm}
\begin{align*}
 \frac{1}{2}\|v_h\|_{\DG}^{new} \le \sup_{w_h \in V_h} 
   \frac{\asipnew(v_h,w_h)}{\|w_h\|_{\DG}^{new}}, \qquad
   \forall v_h \in V_h.
\end{align*}
\item[$(3)$] $(\mathbf{Boundedness})$
The following inequalities hold:
\begin{align}
 \asipnew(v,w_h) & \le C \|v\|_{\DG*}^{new}\|w_h\|_{\DG}^{new}, 
  \qquad \forall (v,w_h) \in V_{*h}\times V_h, 
    \label{bounded1} \\
  \asipnew(v,w) & \le \|v\|_{\DG*}^{new}\|w\|_{\DG*}^{new}, 
  \qquad \forall (v,w) \in V_{*h}\times V_{*h},
  \label{bounded2}
\end{align} 
where the constant $C := C(\eta, C_4^{tr})$ is independent of $h$
and the geometry of elements in $\T_h$.
\end{itemize}
\end{lemma}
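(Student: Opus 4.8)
The plan is to follow the standard SIP-DG arguments of Di Pietro and Ern, but with the key substitution that wherever the classical trace inequality \eqref{trace-in2} is invoked (which carries the shape-regularity-dependent constant $C_2^{tr}$), I instead use the \emph{general} trace inequality \eqref{trace-in1} with the geometry-independent constant $C_4^{tr}$. Concretely, for any $w_h \in V_h$ and any facet $f$ of $T$, $\nabla w_h$ is a polynomial of degree $k-1$, so \eqref{trace-in1} gives $\|\nabla w_h \cdot \bnormal_f\|_{0,f}^2 \le (C_4^{tr})^2 \frac{|f|}{|T|} \|\nabla w_h\|_{0,T}^2$. Since $|\tT_f^i| = |T_f^i|/(d+1)$, we have $\frac{|f|}{|T_f^i|} = \frac{1}{d+1}\frac{|f|}{|\tT_f^i|} \le \frac{1}{d+1}\jumpterm$, so summing over the (at most two) simplices adjacent to $f$ and using the covering property \eqref{covering} to avoid double-counting volumes,
\begin{align*}
   \sum_{f \in \E_h} \jumpterm^{-1}\|\{\nabla w_h\}\cdot\bnormal_f\|_{0,f}^2
   \le C(C_4^{tr}) \sum_{T \in \T_h}\|\nabla w_h\|_{0,T}^2 = C(C_4^{tr})\, a_h^{(0)}(w_h,w_h).
\end{align*}
This is the crucial replacement lemma; everything else is bookkeeping.

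For part (1), discrete coercivity, I would expand $\asipnew(w_h,w_h) = a_h^{(0)}(w_h,w_h) - 2\sum_f \int_f [w_h]\{\nabla w_h\}\cdot\bnormal_f\,\dd s + P_h^{new}(w_h,w_h)$, bound the jump term by Cauchy--Schwarz on each facet as $2\sum_f \|\{\nabla w_h\}\cdot\bnormal_f\|_{0,f}\|[w_h]\|_{0,f}$, and then apply Young's inequality in the weighted form $2ab \le \epsilon \jumpterm^{-1} a^2 + \epsilon^{-1}\jumpterm b^2$ with $a = \|\{\nabla w_h\}\cdot\bnormal_f\|_{0,f}$ and $b = \|[w_h]\|_{0,f}$. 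The first group sums (by the replacement lemma) to $\epsilon\, C(C_4^{tr})\, a_h^{(0)}(w_h,w_h)$, and the second group is $\epsilon^{-1}\eta^{-1}P_h^{new}(w_h,w_h)$. Choosing $\epsilon = 1/(2C(C_4^{tr}))$ and then $\eta \ge 4\epsilon^{-1} = 8C(C_4^{tr})$ yields $\asipnew(w_h,w_h) \ge \tfrac12 a_h^{(0)}(w_h,w_h) + \tfrac12 P_h^{new}(w_h,w_h) = \tfrac12 (\|w_h\|_{\DG}^{new})^2$. This pins down what "sufficiently large $\eta$'' means, in terms of $C_4^{tr}$ only. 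Part (2), discrete stability, is then a formal consequence: given $v_h \in V_h$, take $w_h = v_h$ as a test function, so $\sup_{w_h}\frac{\asipnew(v_h,w_h)}{\|w_h\|_{\DG}^{new}} \ge \frac{\asipnew(v_h,v_h)}{\|v_h\|_{\DG}^{new}} \ge \frac12 \|v_h\|_{\DG}^{new}$ by part (1).

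For part (3), boundedness, I would split $\asipnew(v,w_h)$ into its three pieces. The volume term $a_h^{(0)}(v,w_h) \le a_h^{(0)}(v,v)^{1/2} a_h^{(0)}(w_h,w_h)^{1/2} \le \|v\|_{\DG}^{new}\|w_h\|_{\DG}^{new}$ by Cauchy--Schwarz. The penalty term $P_h^{new}(v,w_h) \le P_h^{new}(v,v)^{1/2}P_h^{new}(w_h,w_h)^{1/2} \le \|v\|_{\DG}^{new}\|w_h\|_{\DG}^{new}$ likewise. For the jump term $J_h(v,w_h)$, the first half $\sum_f \int_f [w_h]\{\nabla v\}\cdot\bnormal_f\,\dd s$ is handled by Cauchy--Schwarz on each facet and then the discrete Cauchy--Schwarz over facets with the weights $\jumpterm^{\pm1}$: it is bounded by $\big(\sum_f \jumpterm^{-1}\|\{\nabla v\}\cdot\bnormal_f\|_{0,f}^2\big)^{1/2}\big(\eta^{-1}\cdot\eta\sum_f\jumpterm\|[w_h]\|_{0,f}^2\big)^{1/2} \le \|v\|_{\DG*}^{new}\|w_h\|_{\DG}^{new}$, using the definition of $\|\cdot\|_{\DG*}^{new}$ for the first factor. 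The second half $\sum_f\int_f[v]\{\nabla w_h\}\cdot\bnormal_f\,\dd s$ needs the replacement lemma: bound it by $\big(\eta\sum_f\jumpterm\|[v]\|_{0,f}^2\big)^{1/2}\big(\eta^{-1}\sum_f\jumpterm^{-1}\|\{\nabla w_h\}\cdot\bnormal_f\|_{0,f}^2\big)^{1/2} \le \eta^{1/2}\|v\|_{\DG}^{new}\cdot\eta^{-1/2}C(C_4^{tr})^{1/2}\,a_h^{(0)}(w_h,w_h)^{1/2} \le C(C_4^{tr})^{1/2}\|v\|_{\DG}^{new}\|w_h\|_{\DG}^{new}$. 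Collecting the pieces gives \eqref{bounded1} with $C = C(\eta, C_4^{tr})$. For \eqref{bounded2}, where the second argument $w$ need not be a polynomial, the replacement lemma is unavailable for $\nabla w$; instead one simply observes that the term $\eta^{-1}\sum_f\jumpterm^{-1}\|\{\nabla w\}\cdot\bnormal_f\|_{0,f}^2$ is already built into $\|w\|_{\DG*}^{new}$, so bounding each of the three pieces of $\asipnew(v,w)$ by Cauchy--Schwarz against the respective components of $\|v\|_{\DG*}^{new}$ and $\|w\|_{\DG*}^{new}$ (and regrouping so that the total constant is exactly $1$, since each component appears with coefficient $1$ in the $\DG*$ norm) yields $\asipnew(v,w) \le \|v\|_{\DG*}^{new}\|w\|_{\DG*}^{new}$. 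The main obstacle is purely organizational: making sure the weights $\jumpterm$ are distributed consistently between the two factors in every Cauchy--Schwarz step, and verifying that the covering \eqref{covering} really does prevent volume double-counting when one sums the local trace bounds (each $\tT_f^i$ sits inside its own $T_f^i$, and distinct facets of the same simplex $T$ give disjoint sub-simplices $\tT$, so $\sum$ over $f$ of $\|\nabla w_h\|_{0,T_f}^2$-type terms is controlled by a fixed multiple — namely $d+1$ — of $a_h^{(0)}(w_h,w_h)$). No genuinely new idea is needed beyond the trace inequality substitution.
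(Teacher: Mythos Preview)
Your proposal is correct and follows essentially the same route as the paper. One technical refinement is worth noting: where you apply the trace inequality \eqref{trace-in1} on the full simplex $T_f^i$ and then convert $|f|/|T_f^i|$ into $\tfrac{1}{d+1}\,|f|/|\tT_f^i|$, the paper applies \eqref{trace-in1} directly on the sub-simplex $\tT_f^i$ (which has $f$ as a facet, and on which $w_h$ is still a polynomial), obtaining $\|\{\nabla w_h\}\cdot\bnormal_f\|_{0,f}$ bounded by $C_4^{tr}$ times $(|f|/|\tT_f^i|)^{1/2}\|\nabla w_h\|_{0,\tT_f^i}$. Then the disjoint partition \eqref{covering} sums the local pieces $\|\nabla w_h\|_{0,\tT_f^i}^2$ to \emph{exactly} $\sum_{T}\|\nabla w_h\|_{0,T}^2$, with no combinatorial overlap factor, yielding the clean threshold $\eta \ge (C_4^{tr})^2$. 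Your version also works, but your appeal to \eqref{covering} is slightly misplaced: since you carry full-simplex norms $\|\nabla w_h\|_{0,T_f^i}$, the relevant bookkeeping fact is just that each $T$ has $d+1$ facets (as you say at the end), not the $\tT$-partition. Finally, the claim that \eqref{bounded2} holds with constant exactly~$1$ needs a bit more care with the cross terms than ``each component appears with coefficient $1$''; the paper is equally casual here, and the precise constant is irrelevant for the application in Theorem~\ref{thm13}.
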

\begin{proof}
(1) Let $w_h \in V_h$. For $f \in \E_h^o$, there exist
$T_f^{i} \in \E_h$ ($i=1,2$) with
$f = T_f^{1} \cap T_f^{2}$.
It follows from the trace inequality \eqref{trace-in1} that
\begin{align*}
   \left\| \left\{\nabla w_h \right\} \cdot \bnormal_f \right\|_{0,f}
  & \le C_4^{tr} \frac{|f|^{1/2}}{2} \left(
  \frac{\|\nabla w_h\|_{0,\tT_f^{1}}}{|\tT_f^{1}|^{1/2}}  
  + \frac{\|\nabla w_h\|_{0,\tT_f^{2}}}{|\tT_f^{2}|^{1/2}}  \right).
\end{align*}
Hence, we have
\begin{align}
 (\{\nabla w_h \} \cdot \bnormal_f, [w_h])_f & \le
  \|\{\nabla w_h \} \cdot \bnormal_f\|_{0,f}\|[w_h]\|_{0,f} \notag \\
 & \le  \frac{C_4^{tr} |f|^{1/2}}{2}
   \left(\frac{\|\nabla w_h\|_{0,\tT_f^{1}}}{|\tT_f^{1}|^{1/2}} + 
    \frac{\|\nabla w_h\|_{0,\tT_f^{2}}}{|\tT_f^{2}|^{1/2}} \right)
    \|[w_h]\|_{0,f} \notag \\
& \le \frac{C_4^{tr}}{2}
   \left(\|\nabla w_h\|_{0,\tT_f^{1}}^2 + \|\nabla w_h\|_{0,\tT_f^{2}}^2
   \right)^{1/2} \jumpterm^{1/2} \|[w_h]\|_{0,f}.
    \label{core4}
   \end{align}
We also obtain a similar inequality for the case $f \in \E_h^{\partial}$.

Because of $\displaystyle J_h(w_h,w_h) = 2\sum_{f \in \E_h}
(\{\nabla w_h \} \cdot \bnormal_f, [w_h])_f$ and \eqref{covering}, we
have that 
\begin{align} 
   J_h(w_h,w_h) & \le
   C_4^{tr}  \left(\sum_{f \in \E_h}
   \jumpterm  \|[w_h]\|_{0,f}^2 \right)^{1/2} 
   \left(\sum_{T \in \T_h} \|\nabla w_h\|_{0,T}^2 \right)^{1/2}.
   \label{core5}
\end{align}
Thus, it follows from the arithmetic-geometric mean that
\begin{align*}
    J_h(w_h,w_h) & \le
   \delta \sum_{T \in \T_h} \|\nabla w_h\|_{0,T}^2
  + \frac{(C_4^{tr})^2}{\delta} \sum_{f \in \E_h}
    \jumpterm  \|[w_h]\|_{0,f}^2  \\
 & = \delta a_h^{(0)}(w_h,w_h) + 
    \frac{(C_4^{tr})^2}{\revise{4} \hspace{0.3mm}
     \delta\eta} P_h^{new}(w_h,w_h)
\end{align*}
for some constant $\delta > 0$.  Set $\delta := 1/2$, and let $\eta$ be
sufficiently large so that
\revise{$\eta \ge (C_4^{tr})^2$}.
Then, the following coercivity holds:
\begin{align*}
 \asipnew(w_h,w_h) & = a_h^{(0)}(w_h,w_h) - J_h(w_h,w_h)
   + P_h^{new}(w_h,w_h) \\
 & \ge \frac{1}{2} a_h^{(0)}(w_h,w_h) + \frac{1}{2}P_h^{new}(w_h,w_h)
  = \frac{1}{2} (\|w_h\|_{\DG}^{new})^2.
\end{align*}

(2) For arbitrary $v_h \in V_h$, we have
\begin{align*}
   \frac{1}{2}\|v_h\|_{\DG}^{new} \le %
   \frac{\asipnew(v_h,v_h)}{\|v_h\|_{\DG}^{new}}
    \le \sup_{w_h \in V_h} 
  \frac{\asipnew(v_h,w_h)}{\|w_h\|_{\DG}^{new}}
\end{align*}
because of the coercivity.

(3) By the Cauchy--Schwarz inequality, we see
that
\begin{align*}
  a_h^{(0)}(v,w_h) & \le \left(a_h^{(0)}(v,v)\right)^{1/2}
     \left(a_h^{(0)}(w_h,w_h)\right)^{1/2}, \\
   P_h^{new}(v,w_h) & \le \left(P_h^{new}(v,v)\right)^{1/2}
     \left(P_h^{new}(w_h,w_h)\right)^{1/2}.
\end{align*}
Furthermore, we have that
\begin{align*}
  \sum_{f \in \E_h} (\{\nabla v\}\cdot\bnormal_f, [w_h])_f
   & \le \sum_{f \in \E_h}
   \eta^{-1/2}\jumpterm^{-1/2}
 \|\{\nabla v\}\cdot\bnormal_f\|_{0,f} \cdot
  \eta^{1/2}\jumpterm^{1/2}\|[w_h]\|_{0,f} \\
 & \le \left( \eta^{-1}\sum_{f \in \E_h}
  \jumpterm^{-1}\|\{\nabla v\}\cdot\bnormal_f\|_{0,f}^2
  \right)^{1/2}\left(P_h^{new}(w_h,w_h)\right)^{1/2}.
\end{align*}
\end{proof}
It follows from \eqref{core4} and \eqref{covering} that
\begin{align*}
   \sum_{f \in \E_h} (\{\nabla w_h\}\cdot\bnormal_f, [v])_f
  & \le \frac{C_4^{tr}}{2}  \sum_{f \in \E_h}
    \left(\|\nabla w_h\|_{0,T_1'}^2 + \|\nabla w_h\|_{0,T_2'}^2
   \right)^{1/2} \jumpterm^{1/2} \|[v]\|_{0,f} \\
  & \le \frac{C_4^{tr}}{2\eta} 
   \left(\sum_{T \in \T_h} \|\nabla w_h\|_{0,T}^2 \right)^{1/2}
    \left(P_h^{new}(v,v)\right)^{1/2}.
\end{align*}
Therefore, we obtain
\begin{align*}
 J_h(v,w_h) & =
  \sum_{f \in \E_h} (\{\nabla v\}\cdot\bnormal_f, [w_h])_f
  + \sum_{f \in \E_h} (\{\nabla w_h\}\cdot\bnormal_f, [v])_f \\
  & \le \left(\sum_{f \in \E_h}
  \eta^{-1}\jumpterm^{-1}\|\{\nabla v\}\cdot\bnormal_f\|_{0,f}^2
  \right)^{1/2}\left(P_h^{new}(w_h,w_h)\right)^{1/2} \\
 & \quad + \frac{C_4^{tr}}{2\eta}
    \left(a_h^{(0)}(w_h,w_h)\right)^{1/2}
    \left(P_h^{new}(v,v)\right)^{1/2}.
\end{align*}
For $(v,w) \in V_{*h} \times V_{*h}$, we immediately obtain
\begin{align*}
 J_h(v,w) & =
  \sum_{f \in \E_h} (\{\nabla v\}\cdot\bnormal_f, [w])_f
  + \sum_{f \in \E_h} (\{\nabla w\}\cdot\bnormal_f, [v])_f \\
  & \le \left(\sum_{f \in \E_h}
  \eta^{-1}\jumpterm^{-1}\|\{\nabla v\}\cdot\bnormal_f\|_{0,f}^2
  \right)^{1/2}\left(P_h^{new}(w,w)\right)^{1/2} \\
 & \quad + \left(\sum_{f \in \E_h}
  \eta^{-1}\jumpterm^{-1}\|\{\nabla w\}\cdot\bnormal_f\|_{0,f}^2
  \right)^{1/2}\left(P_h^{new}(v,v)\right)^{1/2}.
\end{align*}
Gathering these inequalities together, we conclude that
\eqref{bounded1} and \eqref{bounded2} hold.  $\square$

\begin{theorem}\label{thm9}
For the exact solution $u \in V_{*h}$ of the model problem
\eqref{model-eq} and its SIP-DG solution $u_h \in V_h$ of
$\eqref{sip-dg-new}$, the following error estimate holds:
\begin{align}
   \|u - u_h\|_{\DG}^{new} \le C \inf_{y_h \in V_h}
    \|u - y_h\|_{\DG*}^{new},
    \label{error_est-new}
\end{align}
where the constant $C=C(\eta,C_4^{tr})$ is independent of $h$ and
the geometry of elements in $\T_h$.
\end{theorem}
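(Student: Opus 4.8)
\noindent
The plan is to obtain \eqref{error_est-new} by the classical C\'ea--Strang argument, which now applies \emph{verbatim on arbitrary proper meshes} precisely because all the ingredients---consistency (Lemma~$3^{new}$) together with discrete coercivity, discrete stability, and boundedness (Lemma~\ref{lem8})---have been established without the shape-regularity condition and with constants depending only on $\eta$ and $C_4^{tr}$. First I would note that existence and uniqueness of $u_h$ is not an issue: $V_h$ is finite-dimensional and, for $\eta$ sufficiently large, $\asipnew$ is coercive on $V_h$ by Lemma~\ref{lem8}(1), so the square linear system \eqref{sip-dg-new} is nonsingular.

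The core of the argument runs as follows. Fix an arbitrary $y_h \in V_h$ and set $w_h := u_h - y_h \in V_h$. Applying discrete stability (Lemma~\ref{lem8}(2)) to $w_h$ and then using Galerkin orthogonality (Lemma~$3^{new}$) to replace $\asipnew(u_h - y_h, z_h)$ by $\asipnew(u - y_h, z_h)$, one gets
\[
\tfrac{1}{2}\,\|w_h\|_{\DG}^{new} \;\le\; \sup_{z_h \in V_h}\frac{\asipnew(w_h, z_h)}{\|z_h\|_{\DG}^{new}} \;=\; \sup_{z_h \in V_h}\frac{\asipnew(u - y_h, z_h)}{\|z_h\|_{\DG}^{new}}.
\]
Since $y_h \in V_h \subset V_{*h}$, the pair $(u - y_h, z_h)$ lies in $V_{*h}\times V_h$, so boundedness \eqref{bounded1} bounds the right-hand side by $C\,\|u - y_h\|_{\DG*}^{new}$, i.e. $\|w_h\|_{\DG}^{new} \le 2C\,\|u - y_h\|_{\DG*}^{new}$. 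Combining this with the triangle inequality and the trivial estimate $\|\cdot\|_{\DG}^{new} \le \|\cdot\|_{\DG*}^{new}$ gives
\[
\|u - u_h\|_{\DG}^{new} \;\le\; \|u - y_h\|_{\DG}^{new} + \|w_h\|_{\DG}^{new} \;\le\; (1 + 2C)\,\|u - y_h\|_{\DG*}^{new},
\]
and taking the infimum over $y_h \in V_h$ yields \eqref{error_est-new} with $C$ replaced by $1 + 2C = C(\eta, C_4^{tr})$, independent of $h$ and of the geometry of the elements of $\T_h$.

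I expect no genuine obstacle here: the substance of Sections~4--5 was precisely to arrange that coercivity, stability, and boundedness hold with geometry-independent constants, after which this theorem is a formal consequence. The one point requiring care is the choice of splitting $u - u_h = (u - y_h) - w_h$ with $w_h \in V_h$: this keeps the test function in $V_h$, so that the asymmetric estimate \eqref{bounded1} (which measures the first argument in the stronger norm $\|\cdot\|_{\DG*}^{new}$ and the second only in $\|\cdot\|_{\DG}^{new}$) is the relevant one, and it is why the right-hand side of \eqref{error_est-new} carries the $\|\cdot\|_{\DG*}^{new}$-norm rather than $\|\cdot\|_{\DG}^{new}$. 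A secondary check is that $u \in V_{*h}$ (equivalently $u \in V_*$, which holds e.g. under elliptic regularity \eqref{elli-reg}), so that \eqref{bounded1} is legitimately applicable with $v = u - y_h$.
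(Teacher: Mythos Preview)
Your proof is correct and follows essentially the same C\'ea--Strang argument as the paper: use discrete stability (which the paper derives from coercivity) on $u_h - y_h$, swap $u_h$ for $u$ via Galerkin orthogonality, apply boundedness \eqref{bounded1}, and finish with the triangle inequality and $\|\cdot\|_{\DG}^{new}\le\|\cdot\|_{\DG*}^{new}$ to obtain the constant $1+2C$. Your additional remarks on existence/uniqueness of $u_h$ and on why the asymmetric bound \eqref{bounded1} is the relevant one are accurate and simply make explicit what the paper leaves implicit.
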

\begin{proof}
By the consistency of $\asipnew$, we have
\begin{align*}
   \asipnew(u - u_h, \revise{w_h}) = 0, \qquad \forall
  \revise{w_h} \in V_h.
\end{align*}
\revise{
Thus, the discrete coercivity and boundedness yield
\begin{align*}
 \|u_h - y_h\|_{\DG}^{new} & \le 2 \sup_{w_h \in V_h}
    \frac{\asipnew(u_h - y_h,w_h)}{\|w_h\|_{\DG}^{new}}
   = 2 \sup_{w_h \in V_h}
  \frac{\asipnew(u - y_h,w_h)}{\|w_h\|_{\DG}^{new}} \\
 &  \le 2 C\|u - y_h\|_{\DG*}^{new}
\end{align*}
for an arbitrary $ y_h \in V_h$.  Therefore, we obtain
\begin{align*}
 \|u - u_h\|_{\DG}^{new} \le (1 + 2C) 
 \|u - y_h\|_{\DG*}^{new}, \qquad \forall y_h \in V_h.
\end{align*}
}
Taking the infimum for $y_h \in V_h$
\revise{and rewriting $C$}, we conclude that 
\eqref{error_est-new} holds.  $\square$
\end{proof}

\vspace{3mm}
To derive a more practical error estimation, we prepare another
general trace inequality.  Let $T$ be a $d$-simplex and $f$ be a
facet of $T$.  The following inequality holds
\cite[p.24]{Riviere}:
\begin{align}
  \|\nabla v\cdot\bnormal\|_{0,f} & \le 
  C_5^{tr} \frac{|f|^{1/2}}{|T|^{1/2}}
   \left(|v|_{1,T} + h_T|v|_{2,T}\right), \quad \forall v \in H^2(T).
   \label{trace-in3}
\end{align}

Now, let $\I_h^ku \in \PP_k(\T_h)$ be an interpolation of $u$ that
satisfies
\begin{align}
  \left[u - \I_h^k u\right]_f = 0 \qquad \forall f \in \E_h.
   \label{b-cond}
\end{align}
Note that the usual Lagrange interpolation satisfies 
\eqref{b-cond}.  Insert $\I_h^k u$ into $y_h$ in \eqref{error_est-new},
and set $U := u - \I_h^k u$.  Then, $[U]_f = 0$ on any $f \in \E_h$, and
$P_h^{new}(U,U) = 0$.  Therefore, we see that
\begin{align*}
  \|U\|_{\DG} = \left(\sum_{T \in \T_h} |U|_{1,T}^2\right)^{1/2}.
\end{align*}

For a facet $f \in \E_h^o$, let $f = T_f^{1} \cap T_f^{2}$,
 $T_f^{i} \in \T_h$, and
$U_i := U|_{T_f^{i}}$. The trace inequality \eqref{trace-in3} yields
\begin{align*}
  \jumpterm^{-1} \|\{\nabla U\}\cdot\bnormal_f\|_{0,f}^2
  & \le \frac{1}{2}\jumpterm^{-1} 
  \left(\|\nabla U_1\cdot\bnormal_f\|_{0,f}^2
  + \|\nabla U_2\cdot\bnormal_f\|_{0,f}^2 \right) \\
  & \le \frac{(C_5^{tr})^2}{2}\jumpterm^{-1} \Bigl(
   \frac{|f|}{|\tT_f^{1}|}(|U|_{1,\tT_f^{1}} + h_{\tT_f^1}
    |U|_{2,\tT_f^{2}})^2 \\
 & \hspace{3.5cm} + 
   \frac{|f|}{|\tT_f^2|}(|U|_{1,\tT_f^2} + h_{\tT_f^2}|U|_{2,\tT_f^2})^2
   \Bigr) \\
 & \le (C_5^{tr})^2 \left(
    |U|_{1,\tT_f^1}^2 + |U|_{1,\tT_f^2}^2 + h_{\tT_f^1}^2
      |U|_{2,\tT_f^1}^2
    + h_{\tT_f^2}^2 |U|_{2,\tT_f^2}^2 \right),
\end{align*}
and thus
\begin{align*}
 \sum_{f \in \E_h} \jumpterm^{-1}
     \|\{\nabla U\cdot\bnormal\}\|_{0,f}^2 
  \le (C_5^{tr})^2 \sum_{T \in \T_h} \left(|U|_{1,T}^2
   + h_T^2 |U|_{2,T}^2\right).
\end{align*}
That is, the following theorem has been proved.

\begin{theorem}\label{thm10}
Let $u \in V_{*h}$ be the exact solution of the model problem
\eqref{model-eq}, and $u_h \in V_h$ be its SIP-DG solution
 \eqref{sip-dg-new}.  Then, we have the following
error estimation:
\begin{align*}
 \|u - u_h\|_{\DG}^{new} \le C \|u - \I_h^k u\|_{\DG*}^{new} \le C 
   \sum_{T \in \T_h} \left(
   |u - \I_h^k u|_{1,T}^2 + h_T^2 |u - \I_h^k u|_{2,T}^2
   \right)^{1/2},
\end{align*}
where $\I_h^k u$ is an interpolation that satisfies \eqref{b-cond},
and the constant $C = C(C_4^{tr}, C_5^{tr}, \eta)$
is independent of $h$ and the geometry of elements in $\T_h$.
\end{theorem}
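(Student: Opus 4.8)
The plan is to read off Theorem~\ref{thm10} from the abstract estimate of Theorem~\ref{thm9} by making the concrete choice $y_h = \I_h^k u$ and then bounding the two pieces of $\|u-\I_h^k u\|_{\DG*}^{new}$ separately. Since $\I_h^k u \in V_h$, inserting it for $y_h$ in \eqref{error_est-new} gives at once $\|u-u_h\|_{\DG}^{new}\le C\|u-\I_h^k u\|_{\DG*}^{new}$ with $C=C(\eta,C_4^{tr})$, which is the first asserted inequality. It therefore remains to estimate $\|U\|_{\DG*}^{new}$, where $U:=u-\I_h^k u$, by the quantity on the far right.

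First I would exploit the boundary condition \eqref{b-cond}: $[U]_f=0$ for every $f\in\E_h$, hence $P_h^{new}(U,U)=0$, so $\|U\|_{\DG}^{new}=\bigl(\sum_{T\in\T_h}|U|_{1,T}^2\bigr)^{1/2}$ and the only surviving term in $(\|U\|_{\DG*}^{new})^2$ is $\eta^{-1}\sum_{f\in\E_h}\jumpterm^{-1}\|\{\nabla U\}\cdot\bnormal_f\|_{0,f}^2$. To control a single interior facet $f=T_f^1\cap T_f^2$ I would first bound $\|\{\nabla U\}\cdot\bnormal_f\|_{0,f}^2\le\frac12\bigl(\|\nabla U_1\cdot\bnormal_f\|_{0,f}^2+\|\nabla U_2\cdot\bnormal_f\|_{0,f}^2\bigr)$ as in \eqref{core4}, and then apply the \emph{general} trace inequality \eqref{trace-in3} on each auxiliary simplex $\tT_f^i$ — this is legitimate because $f$ is a facet of $\tT_f^i$, $|\tT_f^i|=|T_f^i|/(d+1)$, and $h_{\tT_f^i}\le h_{T_f^i}$ since $\tT_f^i\subseteq T_f^i$. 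The crucial algebraic point is that $\jumpterm^{-1}\,|f|/|\tT_f^i|\le 1$ straight from the definition of $\jumpterm$, so the weight $\jumpterm^{-1}$ exactly cancels the factor $|f|/|\tT_f^i|$ produced by \eqref{trace-in3}; this yields $\jumpterm^{-1}\|\{\nabla U\}\cdot\bnormal_f\|_{0,f}^2\le(C_5^{tr})^2\bigl(|U|_{1,\tT_f^1}^2+|U|_{1,\tT_f^2}^2+h_{\tT_f^1}^2|U|_{2,\tT_f^1}^2+h_{\tT_f^2}^2|U|_{2,\tT_f^2}^2\bigr)$, and a boundary facet $g$ is handled in the same way with $\tT_g$ in place of the two $\tT_f^i$.

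Finally I would sum over all facets and invoke the covering identity \eqref{covering}: the simplices $\{\tT_g:g\in\E_h^\partial\}\cup\{\tT_f^1,\tT_f^2:f\in\E_h^o\}$ have pairwise disjoint interiors and tile $\overline\Omega$, with each $\tT_f^i$ contained in the element $T=T_f^i$ (so $h_{\tT_f^i}\le h_T$), so that the facet sum recombines into $\sum_{T\in\T_h}\bigl(|U|_{1,T}^2+h_T^2|U|_{2,T}^2\bigr)$ with no overlap constant. Collecting everything gives $(\|U\|_{\DG*}^{new})^2\le C\sum_{T\in\T_h}\bigl(|U|_{1,T}^2+h_T^2|U|_{2,T}^2\bigr)$ with $C=C(C_4^{tr},C_5^{tr},\eta)$, and the elementary bound $(\sum_T a_T)^{1/2}\le\sum_T a_T^{1/2}$ for nonnegative $a_T$ puts the estimate in the form stated.

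The only non-routine point — and the whole reason the constant is geometry-independent — is the interplay just described: robustness hinges on applying the trace inequality on the \emph{auxiliary} simplices $\tT_f^i$ rather than on $T_f^i$, and on the fact that $\jumpterm$ was defined precisely so that $\jumpterm^{-1}|f|/|\tT_f^i|\le1$. One must also check the two elementary geometric facts $\tT_f^i\subseteq T_f^i$ (hence $h_{\tT_f^i}\le h_T$) and the non-overlapping character of the covering \eqref{covering}; if either failed, shape-regularity-dependent constants would sneak back in through the recombination of the facet sum into an element sum.
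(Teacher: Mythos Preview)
Your proposal is correct and follows essentially the same route as the paper: insert $y_h=\I_h^k u$ into Theorem~\ref{thm9}, use \eqref{b-cond} to kill $P_h^{new}(U,U)$, apply the general trace inequality \eqref{trace-in3} on the auxiliary simplices $\tT_f^i$ so that the factor $|f|/|\tT_f^i|$ is absorbed by $\jumpterm^{-1}$, and recombine via the covering \eqref{covering}. Your explicit remarks on why the constants stay geometry-independent (the cancellation $\jumpterm^{-1}|f|/|\tT_f^i|\le 1$, the inclusion $\tT_f^i\subset T_f^i$, and the non-overlapping tiling) make transparent what the paper leaves implicit.
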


Let $\I_h^1 u$ be the usual Lagrange interpolation of $u$ and
let $R_T$ be the quantity defined in Section~\ref{sect:meshes}.
Suppose that $k = 1$, $d = 2$ and $u \in H^2(\Omega)$.   Then,
$|u - \I_h^1 u|_{2,T} = |u|_{2,T}$, and, from the results in
\cite{KobayashiTsuchiya1,KobayashiTsuchiya3},
\begin{align}
  |u - \I_h^1 u|_{1,T} \le C_{L1} R_T |u|_{2,T}, \qquad
   \forall u \in H^2(T),
  \label{L-est1}
\end{align}
where the constant $C_{L1}$ is independent of 
the geometry of $T$.  Thus, we find that
\begin{align*}
  \|u - u_h\|_{\DG}^{new} \le \|u - \I_h^k u\|_{\DG*}^{new} & \le C
  \sum_{T \in \T_h} (R_T^2 + h_T^2)^{1/2} |u|_{2,T} \\
  & \le C \max_{T \in \T_h} (R_T^2 + h_T^2)^{1/2} |u|_{2,\Omega}
  \le C (R^2 + h^2)^{1/2} |u|_{2,\Omega},
\end{align*}
where $R := \max_{T \in \T_h} R_T$ and $h := \max_{T \in \T_h} h_T$.

Suppose that $k \ge 2$, $d=2,3$, and $u \in H^{k+1}(\Omega)$.  Then,
from \cite{KobayashiTsuchiya3,KobayashiTsuchiya5,IshiKobaTsuchi1},
\begin{align}
  |u - \I_h^1 u|_{1,T} \le C_{Lk} R_T h_T^{k-1} |u|_{k+1,T}, \qquad
  |u - \I_h^1 u|_{2,T} \le C_{Lk} R_T^2 h_T^{k-3} |u|_{k+1,T},
  \label{L-est2}
\end{align}
and
\vspace{-1mm}
\begin{align*}
 \left(|u - \I_h^k u|_{1,\Omega}^2 + 
   \sum_{T \in \T_h} h_T^2 |u - \I_h^k u|_{2,T}^2
   \right)^{1/2} \le C  \max_{T \in \T_h}
  \left[R_T h_T^{k-2}\left(h_T^2 + R_T^2\right)^{1/2} \right]
  |u|_{k+1,\Omega},
\end{align*}
where the constant $C_{Lk}$ is independent of the geometry of $T$.
Therefore, we have obtained the following corollary.

\begin{corollary}\label{cor11}
 Let $u \in V_{*h}$ be the exact solution of the model problem
\eqref{model-eq}, and $u_h \in V_h$ be its SIP-DG solution
 \eqref{sip-dg-new}.  Suppose that $u \in H^{k+1}(\Omega)$.
Then, we have the following error estimations:
\begin{align*}
 \|u - u_h\|_{\DG}^{new} & \le C 
   \max_{T \in \T_h} (R_T^2 + h_T^2)^{1/2} |u|_{2,\Omega}
   \le C(R^2 + h^2)^{1/2}|u|_{2,\Omega} \quad (k=1,d=2), \\
  \|u - u_h\|_{\DG}^{new} & \le C 
   \max_{T \in \T_h} \left[R_T h_T^{k-2}(R_T^2 + h_T^2)^{1/2}
   \right] |u|_{k+1,\Omega} \\
  & \le CRh^{k-2}(R^2 + h^2)^{1/2}|u|_{k+1,\Omega} \quad
 (k\ge 2, d=2,3),
\end{align*}
where the constant $C = C(C_4^{tr}, C_5^{tr}, C_{Lk}, \eta)$ is
independent of $h$, $R$, and the geometry of elements in $\T_h$.
\end{corollary}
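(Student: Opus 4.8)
The plan is to combine Theorem~\ref{thm10} with the anisotropic Lagrange interpolation estimates \eqref{L-est1}--\eqref{L-est2}, whose constants $C_{L1},C_{Lk}$ depend on $k$ alone and not on the shape of $T$. First I would take $\I_h^k u$ to be the usual degree-$k$ Lagrange interpolant of $u$: since $d\le 3$, $u\in H^{k+1}(\Omega)$ and $k\ge 1$, the embedding $H^{k+1}(\Omega)\hookrightarrow C^0(\overline\Omega)$ makes it well defined, and since $\T_h$ is face-to-face it is globally continuous, so $[u-\I_h^k u]_f=0$ for every $f\in\E_h$; thus \eqref{b-cond} holds, $P_h^{new}(u-\I_h^k u,u-\I_h^k u)=0$, and (from the computation carried out in the proof of Theorem~\ref{thm10}) $\|u-\I_h^k u\|_{\DG*}^{new}\le C\big(\sum_{T\in\T_h}(|u-\I_h^k u|_{1,T}^2+h_T^2|u-\I_h^k u|_{2,T}^2)\big)^{1/2}$, leaving only a sum of \emph{local} interpolation errors to estimate.

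For $k=1$, $d=2$ I would use that $\I_h^1 u$ is affine on each $T$, so $|u-\I_h^1 u|_{2,T}=|u|_{2,T}$, together with \eqref{L-est1}; then the $T$-th summand is at most $(C_{L1}^2R_T^2+h_T^2)|u|_{2,T}^2\le C(R_T^2+h_T^2)|u|_{2,T}^2$. Extracting $\max_{T}(R_T^2+h_T^2)^{1/2}$ from the sum and using $\sum_T|u|_{2,T}^2=|u|_{2,\Omega}^2$ gives $\|u-u_h\|_{\DG}^{new}\le C\max_T(R_T^2+h_T^2)^{1/2}|u|_{2,\Omega}$, and $\max_T(R_T^2+h_T^2)\le R^2+h^2$ yields the coarser form. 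For $k\ge 2$, $d=2,3$ I would instead substitute both halves of \eqref{L-est2}, so that the $T$-th summand equals $C_{Lk}^2\big(R_T^2h_T^{2k-2}+h_T^2R_T^4h_T^{2k-6}\big)|u|_{k+1,T}^2=C_{Lk}^2R_T^2h_T^{2k-4}(h_T^2+R_T^2)|u|_{k+1,T}^2$; summing, pulling out the maximum, and using $\sum_T|u|_{k+1,T}^2=|u|_{k+1,\Omega}^2$ gives $\|u-u_h\|_{\DG}^{new}\le C\max_T\big[R_Th_T^{k-2}(R_T^2+h_T^2)^{1/2}\big]|u|_{k+1,\Omega}$; finally, since $k\ge 2$ the exponent $2k-4$ is nonnegative, so $R_Th_T^{k-2}(R_T^2+h_T^2)^{1/2}$ is monotone nondecreasing in $R_T\le R$ and in $h_T\le h$, which delivers the last bound $CRh^{k-2}(R^2+h^2)^{1/2}|u|_{k+1,\Omega}$. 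In each case the constant is $C=C(C_4^{tr},C_5^{tr},C_{Lk},\eta)$, with all shape dependence absent.

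I do not anticipate a genuine obstacle here: the argument is purely an assembly of Theorem~\ref{thm10} with \eqref{L-est1}--\eqref{L-est2}. The two points I would watch are: (i) one must work with the $\big(\sum_T(\cdot)\big)^{1/2}$ form of Theorem~\ref{thm10} rather than $\sum_T(\cdot)^{1/2}$, since otherwise a factor $\#\T_h$ would creep in when replacing $\sum_T|u|_{k+1,T}$ by $|u|_{k+1,\Omega}$; and (ii) the whole point is that the interpolation bounds invoked are the \emph{anisotropic} ones, in which $R_T$ (not $h_T/\rho_T$) is the controlling quantity and the constants are shape-independent — it is this, together with the geometry-independence of $C_4^{tr},C_5^{tr}$ already established in Theorem~\ref{thm10}, that makes the estimate hold with no shape-regularity hypothesis.
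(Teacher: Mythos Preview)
Your proposal is correct and follows essentially the same route as the paper: apply Theorem~\ref{thm10} with the Lagrange interpolant, insert the anisotropic estimates \eqref{L-est1}--\eqref{L-est2} elementwise, pull out the maximum of $R_T,h_T$, and sum. Your caveat~(i) is well taken: the paper's \emph{statement} of Theorem~\ref{thm10} is written as $\sum_T(\cdot)^{1/2}$, but its derivation (the display just before the theorem) actually yields $\big(\sum_T(\cdot)\big)^{1/2}$, and it is this latter form---exactly as you use it---that is needed to recombine $\sum_T|u|_{k+1,T}^2$ into $|u|_{k+1,\Omega}^2$ without picking up a spurious $\#\T_h$ factor; the paper itself silently uses the squared-sum form in the $k\ge2$ computation preceding the corollary.
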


Let $d=2$.  In this case, elementary geometry (the law of sines) tells
us that a mesh $\T_h$ satisfies the maximum angle condition if and
only if there exists a constant $\sigma_2$ such that
\begin{align}
    \frac{R_T}{h_T} 
  \le \sigma_2, \qquad \forall T \in \T_h.
   \label{max-angle2}
\end{align}
\revise{
Recently, it is reported that the same situation holds for 
tetrahedrons.  That is, a tetrahedron satisfies the maximum angle
condition if and only if \eqref{max-angle2} holds 
\cite{IshiSuzKobaTsuc2}.  See also \cite{KobayashiTsuchiya6}.
}
Hence, we have the following corollary.

\begin{corollary}\label{cor12}
\revise{
Suppose that $k \ge 1$ if $d = 2$, and $k \ge 2$ if $d = 3$.
}
Let $u \in V_{*h}$ be the exact solution of the model
problem \eqref{model-eq}, and $u_h \in V_h$ be its SIP-DG solution
 \eqref{sip-dg-new}.  Suppose that $u \in H^{k+1}(\Omega)$ and
$\T_h$ satisfies the maximum angle condition.
Then, we have the following error estimation:
\begin{align*}
  \|u - u_h\|_{\DG}^{new}  \le C h^k |u|_{k+1,\Omega},
\end{align*}
where the constant $C$ depends only on $C_4^{tr}$, $C_5^{tr}$,
$C_{Lk}$, $\eta$, and $\sigma_2$.
\end{corollary}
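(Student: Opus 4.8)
The plan is to derive Corollary~\ref{cor12} as a direct consequence of Corollary~\ref{cor11} by invoking the geometric characterization \eqref{max-angle2} of the maximum angle condition. First I would recall that, under the maximum angle condition, there exists $\sigma_2 > 0$ with $R_T \le \sigma_2 h_T$ for every $T \in \T_h$ and every $\T_h$: for $d=2$ this is the law of sines (the statement preceding \eqref{max-angle2}), and for $d=3$ with $k \ge 2$ this is the recently established tetrahedral analogue cited as \cite{IshiSuzKobaTsuc2}. Taking the maximum over $T \in \T_h$ gives $R \le \sigma_2 h$, where $R := \max_{T\in\T_h} R_T$ and $h := \max_{T\in\T_h} h_T$.

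Next I would substitute this bound into the estimates of Corollary~\ref{cor11}. In the case $k=1$, $d=2$, the factor $(R^2+h^2)^{1/2}$ is bounded by $(\sigma_2^2 + 1)^{1/2}\, h = (\sigma_2^2+1)^{1/2}\, h^k$, so
\begin{align*}
  \|u - u_h\|_{\DG}^{new} \le C(\sigma_2^2+1)^{1/2}\, h\, |u|_{2,\Omega}.
\end{align*}
In the case $k \ge 2$ (with $d = 2$ or $d = 3$), the factor $R h^{k-2}(R^2+h^2)^{1/2}$ is bounded by $\sigma_2 h \cdot h^{k-2} \cdot (\sigma_2^2+1)^{1/2} h = \sigma_2(\sigma_2^2+1)^{1/2}\, h^k$, hence
\begin{align*}
  \|u - u_h\|_{\DG}^{new} \le C\sigma_2(\sigma_2^2+1)^{1/2}\, h^k\, |u|_{k+1,\Omega}.
\end{align*}
In both cases the resulting constant depends only on $C_4^{tr}$, $C_5^{tr}$, $C_{Lk}$, $\eta$ (through the constant of Corollary~\ref{cor11}) and on $\sigma_2$, which is exactly the claimed dependence. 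Renaming the constant as $C$ completes the argument.

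There is essentially no serious obstacle here: the only point that requires care is the validity of the characterization $R_T \le \sigma_2 h_T$ in three dimensions, which is why the hypothesis restricts to $k \ge 2$ when $d = 3$ — this matches the range covered by Corollary~\ref{cor11} and relies on the equivalence result of \cite{IshiSuzKobaTsuc2}. Everything else is the routine bookkeeping of absorbing $\sigma_2$-dependent factors into the constant.
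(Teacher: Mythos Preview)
Your proposal is correct and matches the paper's approach exactly: the paper simply writes ``Hence, we have the following corollary'' immediately after stating the equivalence \eqref{max-angle2} for both $d=2$ and $d=3$, so Corollary~\ref{cor12} is meant to follow from Corollary~\ref{cor11} by bounding $R \le \sigma_2 h$ and absorbing the resulting $\sigma_2$-dependent factors into the constant, precisely as you do. One cosmetic remark: the restriction $k\ge 2$ for $d=3$ stems from the range of Corollary~\ref{cor11} (the $k=1$ estimate there is stated only for $d=2$), not from the geometric equivalence \eqref{max-angle2} itself, which holds for tetrahedrons independently of $k$.
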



\vspace{3mm}
For an $L^2$ error estimate, we have the following theorem, which is
quite similar to the Aubin--Nitsche lemma.
\begin{theorem}\label{thm13}
Let $d=2$.
Let $u \in V_{*h}$ be the exact solution of the model problem
\eqref{model-eq}, and $u_h \in V_h$ be its SIP-DG solution
 \eqref{sip-dg-new}.  Suppose that the model problem satisfies
the elliptic regularity \eqref{elli-reg}.  Then, we have the following
error estimation:
\begin{align*}
  \|u - u_h\|_{0,\Omega} \le 
 C R \|u - u_h\|_{\DG*}^{new},
\end{align*}
where $C = C(C_4^{tr}, C_5^{tr}, \eta, C_{ell}, C_{L1})$.
\end{theorem}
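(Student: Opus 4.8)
The plan is to argue by duality, in the spirit of the Aubin--Nitsche lemma. Write $e := u - u_h \in V_{*h}$ and introduce the dual problem: find $z \in V_* = H_0^1(\Omega)\cap H^2(\Omega)$ such that $-\Delta z = e$ in $\Omega$, $z = 0$ on $\partial\Omega$. Applying the elliptic regularity \eqref{elli-reg} with right-hand side $e \in L^2(\Omega)$, such a $z$ exists, belongs to $V_* \subset V_{*h}$, and satisfies $\|z\|_{2,\Omega} \le C_{ell}\|e\|_{0,\Omega}$.

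The first substantive step is the \emph{adjoint consistency} of $\asipnew$: for every $v \in V_{*h}$,
\begin{align*}
  \asipnew(v,z) = (e,v)_\Omega .
\end{align*}
Indeed, since $z \in H^2(\Omega)\cap H_0^1(\Omega)$, every jump $[z]_f$ vanishes and $\{\nabla z\}_f = \nabla z|_f$, so that $P_h^{new}(v,z) = 0$ and $J_h(v,z) = \sum_{f\in\E_h}\int_f [v]\{\nabla z\}\cdot\bnormal_f\,\dd s$; integrating $a_h^{(0)}(v,z) = \sum_{T\in\T_h}\int_T \nabla v\cdot\nabla z\,\dd\bfx$ by parts elementwise and using $-\Delta z = e$ reproduces exactly these facet terms, which cancel $J_h(v,z)$ and leave $(e,v)_\Omega$. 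Taking $v = e$ gives $\|e\|_{0,\Omega}^2 = \asipnew(e,z)$. Next, let $\I_h^1 z \in V_h$ be the Lagrange interpolant of $z$ (well defined for $d=2$ since $H^2(\Omega)\hookrightarrow C(\overline\Omega)$), which satisfies \eqref{b-cond}; then $U := z - \I_h^1 z \in V_{*h}$ has $[U]_f = 0$ on every $f$, and Galerkin orthogonality (Lemma~$3^{new}$) together with the boundedness estimate \eqref{bounded2} yields
\begin{align*}
  \|e\|_{0,\Omega}^2 = \asipnew(e,z) = \asipnew(e,U) \le \|e\|_{\DG*}^{new}\,\|U\|_{\DG*}^{new} .
\end{align*}

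It remains to bound $\|U\|_{\DG*}^{new}$ by $C R\,\|e\|_{0,\Omega}$, where $R := \max_{T\in\T_h} R_T$. Since $[U]_f = 0$ we have $P_h^{new}(U,U) = 0$ and $(\|U\|_{\DG}^{new})^2 = \sum_{T} |U|_{1,T}^2$; running the computation that immediately precedes Theorem~\ref{thm10} — the general trace inequality \eqref{trace-in3} combined with the covering \eqref{covering} — gives
\begin{align*}
  \left(\|U\|_{\DG*}^{new}\right)^2 \le C\sum_{T\in\T_h}\left(|U|_{1,T}^2 + h_T^2|U|_{2,T}^2\right).
\end{align*}
Inserting the interpolation estimate \eqref{L-est1}, $|U|_{1,T} \le C_{L1} R_T |z|_{2,T}$, together with $|U|_{2,T} = |z|_{2,T}$, and using the elementary planar fact that the longest edge of a triangle is a chord of its circumscribed circle — hence $h_T \le 2 R_T$ — we obtain $|U|_{1,T}^2 + h_T^2|U|_{2,T}^2 \le (C_{L1}^2 + 4) R_T^2 |z|_{2,T}^2$, and therefore $\|U\|_{\DG*}^{new} \le C R\,|z|_{2,\Omega} \le C R\,\|z\|_{2,\Omega} \le C\,C_{ell}\,R\,\|e\|_{0,\Omega}$. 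Combining this with the previous display and cancelling one factor of $\|e\|_{0,\Omega}$ gives $\|u - u_h\|_{0,\Omega} \le C R\,\|u - u_h\|_{\DG*}^{new}$, with $C$ depending only on the constants of the boundedness, trace, interpolation and regularity estimates, i.e.\ on $C_4^{tr}$, $C_5^{tr}$, $\eta$, $C_{ell}$, $C_{L1}$.

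The step I expect to be the main obstacle is the adjoint consistency identity: one has to check carefully that the elementwise integration by parts of $a_h^{(0)}(v,z)$ reproduces precisely $\sum_{f}\int_f [v]\{\nabla z\}\cdot\bnormal_f\,\dd s$ and that these terms cancel $J_h(v,z)$ for \emph{all} $v \in V_{*h}$, not merely for $v \in V_h$; here both the symmetry of $\asipnew$ and the regularity $z \in H^2\cap H_0^1$ (so that $[z]_f$ and $[\nabla z\cdot\bnormal]_f$ vanish) are essential. The second delicate point — and the reason the statement is restricted to $d=2$ — is the inequality $h_T \le 2R_T$ that allows $R_T$ to absorb $h_T$: the circumradius of a triangle is at least half its diameter, whereas the three-dimensional parameter $R_T$ of \eqref{def-RT} admits no such clean relation, and \eqref{L-est1} is intrinsically planar.
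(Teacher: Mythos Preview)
Your argument is correct and follows essentially the same Aubin--Nitsche duality route as the paper: introduce the dual solution, verify (adjoint) consistency to get $\|e\|_{0,\Omega}^2=\asipnew(e,z)$, subtract the Lagrange interpolant via Galerkin orthogonality, and apply the boundedness \eqref{bounded2} together with the $\|\cdot\|_{\DG*}^{new}$-interpolation bound derived before Theorem~\ref{thm10}. In fact your write-up is more explicit than the paper at one point: the paper jumps straight from $\|\zeta-\I_h^1\zeta\|_{\DG*}^{new}$ to $C_{L1}R\,|\zeta|_{2,\Omega}$, whereas you correctly spell out that the remaining term $h_T^2|U|_{2,T}^2=h_T^2|\zeta|_{2,T}^2$ is absorbed via the planar inequality $h_T\le 2R_T$, which is precisely why the statement is restricted to $d=2$.
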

\noindent
\textit{Proof.} Setting $\phi := u - u_h$ in \eqref{bilinearform}, 
we consider an auxiliary problem: find $\zeta \in H_0^1(\Omega)$
such that
\begin{align*}
  a(\zeta, v) = \int_\Omega (u - u_h)v \dd x, \qquad
   \forall v \in H_0^1(\Omega).
\end{align*}
Because of the elliptic regularity \eqref{elli-reg}, we have
$\|\zeta\|_{2,\Omega} \le C_{ell} \|u-u_h\|_{0,\Omega}$.
Because $\zeta \in H^2(\Omega)$, we have
$[\nabla \zeta]\cdot\bnormal_f = \mathbf{0}$ on any $f \in \E_h^o$,
and $[\zeta] = 0$ on any $f \in \E_h$.   Hence, the definition
of $\asipnew$ implies that
\begin{align*}
  \asipnew(\zeta,u-u_h) & = \sum_{T \in T_h} 
  \int_T \nabla \zeta\cdot \nabla (u - u_h) \dd\bfx
  - \sum_{f \in \E_h} \int_f [u - u_h]\nabla \zeta\cdot
  \bnormal_f \dd  \\
 & = \int_{\Omega}(- \Delta \zeta)(u - u_h) \dd\bfx
  = \|u - u_h\|_{0,\Omega}^2.
\end{align*}
Let $\I_h^{1}\zeta$ be the piecewise linear Lagrange interpolation
of $\zeta$ on $\T_h$.
Because of the Galerkin orthogonality in Lemma~$3^{new}$,
we infer that
\begin{align*}
   \asipnew(u-u_h,\I_h^{1}\zeta) = 0.
\end{align*}
Therefore, it follows from \eqref{bounded2} and \eqref{L-est1} that
\begin{align*}
  \|u - u_h\|_{0,\Omega}^2 & = \asipnew(\zeta,u-u_h)
   = \asipnew(u-u_h, \zeta - \I_h^{1}\zeta) \\
  & \le \|u-u_h\|_{\DG*}^{new}
   \|\zeta - \I_h^{1}\zeta\|_{\DG*}^{new} \\
  & \le C_{L1} R \|u-u_h\|_{\DG*}^{new} |\zeta|_{2,\Omega} \\
  & \le C_{L1} C_{ell} R \|u-u_h\|_{\DG*}^{new}\|u - u_h\|_{0,\Omega}.
\end{align*}
This completes the proof. $\square$

\vspace{3mm}
\begin{corollary}\label{cor14}
Let $d = 2$. 
Let $u \in V_{*h}$ be the exact solution of the model problem
\eqref{model-eq}, and $u_h \in V_h$ be its SIP-DG solution
 \eqref{sip-dg-new}.  Suppose that elliptic regularity holds,
and $u \in H^{k+1}(\Omega)$.
Then, we have the following error estimation:
\begin{align*}
 \|u - u_h\|_{0,\Omega} & \le C 
    R (R^2 + h^2)^{1/2}|u|_{2,\Omega} \quad (k=1), \\
  \|u - u_h\|_{0,\Omega}  & \le C R^2h^{k-2}
  (R^2 + h^2)^{1/2}|u|_{k+1,\Omega} \quad  (k\ge 2),
\end{align*}
where the constant $C = C(C_4^{tr}, C_5^{tr}, \eta, C_{ell}, C_{L1})$ is
independent of $h$, $R$, and the geometry of elements in $\T_h$.
\end{corollary}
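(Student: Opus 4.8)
\noindent
The plan is to combine Theorem~\ref{thm13} with the interpolation-based estimate already obtained in the derivation of Corollary~\ref{cor11}. Theorem~\ref{thm13} gives $\|u-u_h\|_{0,\Omega}\le C R\,\|u-u_h\|_{\DG*}^{new}$ with $C=C(C_4^{tr},C_5^{tr},\eta,C_{ell},C_{L1})$, so it reduces the $L^2$-error to an estimate of the mesh-dependent norm $\|u-u_h\|_{\DG*}^{new}$. The only point not yet recorded is that this $\DG*$-norm of the error admits the same upper bound that Corollary~\ref{cor11} established for $\|u-u_h\|_{\DG}^{new}$; proving this is essentially the whole task.

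To that end I would split $u-u_h=(u-\I_h^k u)+(\I_h^k u-u_h)$. By Theorem~\ref{thm10}, $\|u-\I_h^k u\|_{\DG*}^{new}\le C\bigl(\sum_{T\in\T_h}(|u-\I_h^k u|_{1,T}^2+h_T^2|u-\I_h^k u|_{2,T}^2)\bigr)^{1/2}$, and Theorem~\ref{thm10} also bounds $\|u-u_h\|_{\DG}^{new}$ by the same quantity. For the remaining term $\I_h^k u-u_h\in V_h$, I would observe that on $V_h$ the norms $\|\cdot\|_{\DG}^{new}$ and $\|\cdot\|_{\DG*}^{new}$ are equivalent: squaring the general trace inequality \eqref{trace-in1} exactly as in \eqref{core4} gives, for $w_h\in V_h$ and $f=T_f^1\cap T_f^2\in\E_h^o$, $\jumpterm^{-1}\|\{\nabla w_h\}\cdot\bnormal_f\|_{0,f}^2\le\frac{(C_4^{tr})^2}{2}\bigl(\|\nabla w_h\|_{0,\tT_f^1}^2+\|\nabla w_h\|_{0,\tT_f^2}^2\bigr)$ (with the analogous bound on $\E_h^\partial$); summing over $f$ and using the covering \eqref{covering} yields $\|w_h\|_{\DG*}^{new}\le C(\eta,C_4^{tr})\,\|w_h\|_{\DG}^{new}$. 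Hence $\|\I_h^k u-u_h\|_{\DG*}^{new}\le C\|\I_h^k u-u_h\|_{\DG}^{new}\le C\bigl(\|u-\I_h^k u\|_{\DG}^{new}+\|u-u_h\|_{\DG}^{new}\bigr)$, and since $\|\cdot\|_{\DG}^{new}\le\|\cdot\|_{\DG*}^{new}$ the right-hand side is bounded by $C\|u-\I_h^k u\|_{\DG*}^{new}$. Putting the two pieces together, $\|u-u_h\|_{\DG*}^{new}\le C\|u-\I_h^k u\|_{\DG*}^{new}$.

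It then remains only to substitute the Lagrange interpolation estimates \eqref{L-est1} (for $k=1$) and \eqref{L-est2} (for $k\ge2$), exactly as in the passage preceding Corollary~\ref{cor11}, which gives $\|u-u_h\|_{\DG*}^{new}\le C(R^2+h^2)^{1/2}|u|_{2,\Omega}$ when $k=1$ and $\|u-u_h\|_{\DG*}^{new}\le C R h^{k-2}(R^2+h^2)^{1/2}|u|_{k+1,\Omega}$ when $k\ge2$, with $R=\max_{T\in\T_h}R_T$ and $h=\max_{T\in\T_h}h_T$. Multiplying by the extra factor $R$ from Theorem~\ref{thm13} produces the two asserted bounds, the constant $C$ depending only on $C_4^{tr}$, $C_5^{tr}$, $C_{Lk}$, $\eta$, $C_{ell}$, and $C_{L1}$. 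I anticipate no genuine obstacle; the only step requiring a little care is the norm equivalence on $V_h$, which is an immediate consequence of \eqref{trace-in1} and \eqref{covering} already used in the proof of Lemma~\ref{lem8}. Everything else is bookkeeping of constants.
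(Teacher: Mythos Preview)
Your proposal is correct and matches the paper's intended (but unwritten) route: the paper states Corollary~\ref{cor14} without proof, clearly meaning it to follow from Theorem~\ref{thm13} combined with the interpolation bounds established just before Corollary~\ref{cor11}. You have correctly identified and supplied the one step the paper leaves implicit, namely that Theorem~\ref{thm13} requires control of $\|u-u_h\|_{\DG*}^{new}$ rather than $\|u-u_h\|_{\DG}^{new}$, and that this is obtained via the norm equivalence $\|\cdot\|_{\DG*}^{new}\le C(\eta,C_4^{tr})\|\cdot\|_{\DG}^{new}$ on $V_h$ (a direct consequence of \eqref{trace-in1} and \eqref{covering}, exactly as in the proof of Lemma~\ref{lem8}) together with the triangle inequality splitting through $\I_h^k u$.
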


\vspace{3mm}
\noindent
\textit{Remark.}  It is conjectured that
Theorem~\ref{thm13} and Corollary~\ref{cor14} hold for $d = 3$.
To show this conjecture, we need to use a different interpolation, such
as the Crouzeix--Raviart interpolation, and analyze its error in
terms of the $\|\cdot\|_{DG*}^{new}$ norm.

\section{Numerical experiments for the new SIP-DG scheme}
\label{num-experi2}
In this section, we report the results of numerical experiments to
confirm the theoretical results obtained in the previous section.
First, we consider the same numerical experiment as in Section~3.3
(the same domain $\Omega$, same function $\phi$, and same meshes
$\T_h$) with the new SIP-DG scheme.  We fix $n=40$ and the penalty
parameter $\eta = 0.8$.
\footnote{
\revise{
If we set $\alpha=1$ in \eqref{adaptive}, we have
$\jumpterm = 12/h$.  Therefore, setting $\eta = 0.8$ 
in the proposed SIP-DG scheme approximately corresponds to setting 
$\eta = 10$ in the standard SIP-DG scheme in this case.
}
}
Then, the new SIP-DG method is
applied to the model problem \eqref{model-eq} with various $m$.  The
results are summarized in Table~\ref{tab2}.
\begin{table}[hbtp]
\caption{Errors produced using the new SIP-DG method.} \label{tab2}
\begin{tabular}[t]{|c|c|c|c|c|c|c|}
\hline
 $m$ & $h$ & $R$ & $L^2$-error & $H^1(\T_h)$-error & $P_h^{new}$-error & DG-error \\
\hline 
40 & 3.536e-2 & 1.768e-2 & 2.905e-4 & 3.630e-2 & 
\revise{2.037e-2} & \revise{4.162e-2} \\
\hline
80 & 2.795e-2 & 1.398e-2 & 1.719e-4 & 2.824e-2 &
\revise{1.635e-2} & \revise{3.133e-2} \\
\hline 
120 & 2.637e-2 & 1.318e-2 & 1.449e-4 & 2.630e-2 &
\revise{1.563e-2} & \revise{3.059e-2} \\
\hline 
160 & 2.578e-2 & 1.289e-2 & 1.347e-4 & 2.557e-2 &
\revise{1.531e-2} & \revise{2.962e-2} \\
\hline 
200 & 2.550e-2 & 1.275e-2 & 1.298e-4 & 2.522e-2 &
\revise{1.526e-2} & \revise{2.947e-2} \\
\hline 
400 & 2.512e-2 & 1.256e-2 & 1.234e-4 & 2.474e-2 &
\revise{1.509e-2} & \revise{2.898e-2} \\
\hline 
\end{tabular}
\end{table}
Note that the discretized solution $u_h$ is stable for all 
cases, and the errors decrease as $m$ increases, which is consistent
with the error estimations.

\revise{
To see how $\eta$ affects the performance of the proposed
SIP-DG scheme, we vary $\eta$ as $0.6, 0.5, 0.4, 0.3, 0.2, 0.1$
in the cases $m=40$ and $m=120$.  The proposed SIP-DG scheme is stable if
$\eta \ge 0.4$.  The scheme becomes unstable (more CPU time and
less accuracy) if $\eta \le 0.3$.  The ICCG method does not converge if
$\eta = 0.1$.
}

For the next experiment, we set $\Omega := (-1,1)\times (-1,1)$ and
$f(x,y) := \pi^2 \sin(\pi x) \sin(\pi y)$ in \eqref{model-eq}.
In this case, the exact solution is again
$u(x,y) = \sin(\pi x) \sin(\pi y)/2$.
We introduce the following Schwarz--Peano-type meshes for
$\Omega$. 

For a given positive integer $N$ and $\alpha > 1$, we
consider the isosceles triangle with base length $h:=2/N$ and height
$2/\lfloor 2/h^\alpha\rfloor \approx h^\alpha$ 
(then, $R_T \approx h^\alpha/2 + h^{2-\alpha}/8$), as depicted in
Figure~4.  We triangulate $\Omega$ with this triangle
as shown in Figure~5. 
Note that if $h \to 0$, these meshes satisfy neither the
shape-regularity condition nor the maximum angle condition.
Furthermore, if $\alpha \ge 2.0$ and $h \to 0$, the meshes do not
satisfy the circumradius condition.
See Kobayashi--Tsuchiya \cite{KobayashiTsuchiya2}.

\vspace{5mm}
\begin{tikzpicture}[line width = 1pt]
\newcommand{\yasuko}{2.0}
\newcommand{\ikuko}{0.7}
  \coordinate(A) at (-\yasuko,0);
  \coordinate(B) at (\yasuko,0);
  \coordinate(C) at (0,\ikuko);
  \draw[line width=1pt] (A) -> (B);
  \draw[line width=1pt] (B) -> (C);
  \draw[line width=1pt] (C) -> (A);
  \draw[line width=0.5pt,<->] (-\yasuko,-0.15) -> (\yasuko,-0.15)
   node[pos=0.5,below]{$h$};;
  \draw[line width=0.5pt,<->] (\yasuko+0.1,0.0) -> (\yasuko+0.1,\ikuko)
  node[pos=0.5,right]{$\approx h^\alpha$};;
\end{tikzpicture}
\quad \raise 8mm\hbox{\small{\textbf{Fig.~4} An isosceles triangle.}} \\

\includegraphics[width=5cm]{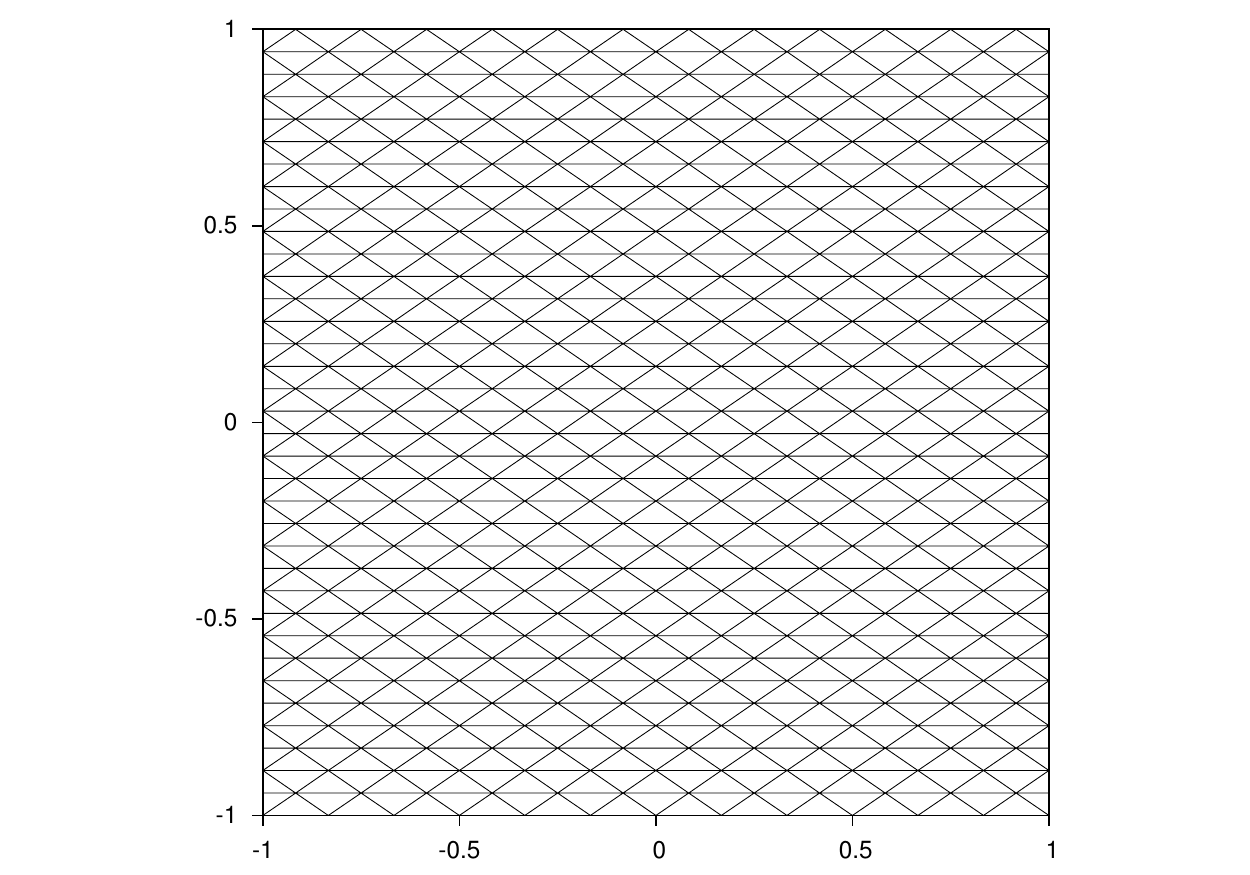} 
\hspace{-1mm}\raise 20mm
\hbox{
\begin{minipage}[c]{7cm}
\small{\textbf{Fig.~5} Schwarz--Peano-type mesh of $\Omega$. \\
$N=12$,  $\alpha=1.6$.}
\end{minipage}
}

\vspace{5mm}
We compute the SIP-DG solutions on this mesh with 
$\alpha = 1.2$, $1.5$, $1.8$, $2.0$, $2.1$ and various $N$,
and measure the errors. Figures~6 and 7 list the numerical results.
Tables~\ref{tab3} and \ref{tab4} contain concrete values of the numerical
results for $\alpha=1.5$ and $\alpha=2.0$, respectively.


\revise{
It seems that both errors behave similarly as $\alpha$ varies.
 Observing Figure~6 and Figure~7, we can infer that
$a_h^{(0)}(u-u_h,u-u_h)^{1/2}$ and $P_h^{(0)}(u-u_h,u-u_h)^{1/2}$
are $\mathcal{O}(R)$, that is, the errors
are governed by the parameter $R$, not $h$.  For example, when
$\alpha = 2.1$, the errors $a_h^{(0)}(u-u_h,u-u_h)^{1/2}$
and $P_h^{(0)}(u-u_h,u-u_h)^{1/2}$ increase as
$h$ decreases. However, the ratio $a_h^{(0)}(u-u_h,u-u_h)^{1/2}/R$
and $P_h^{(0)}(u-u_h,u-u_h)^{1/2}/R$ seem to be bounded by a constant.
}


\setcounter{figure}{5}
\begin{figure}[hbtp]
\hspace{-6mm}
\includegraphics[width=6.9cm]{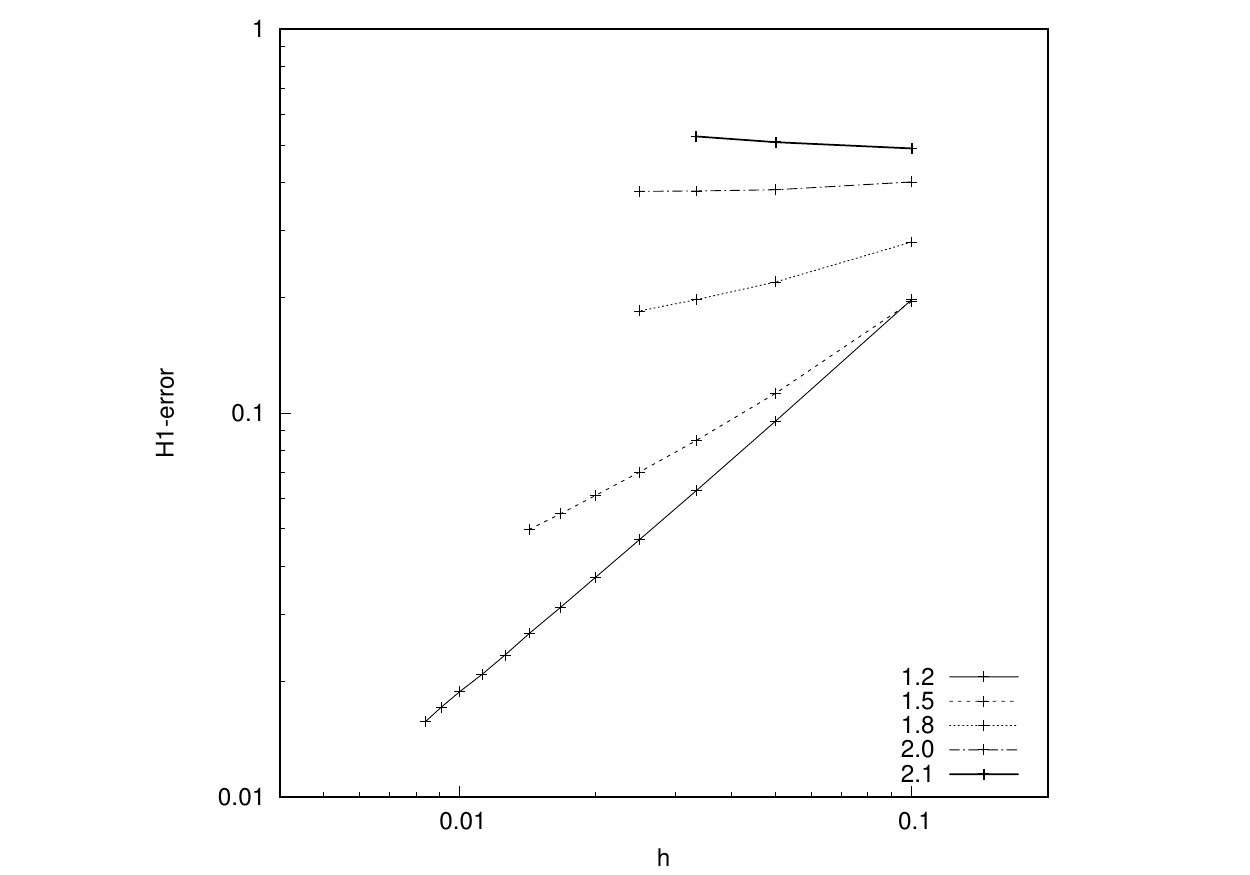} \hspace{-12mm}
\includegraphics[width=6.9cm]{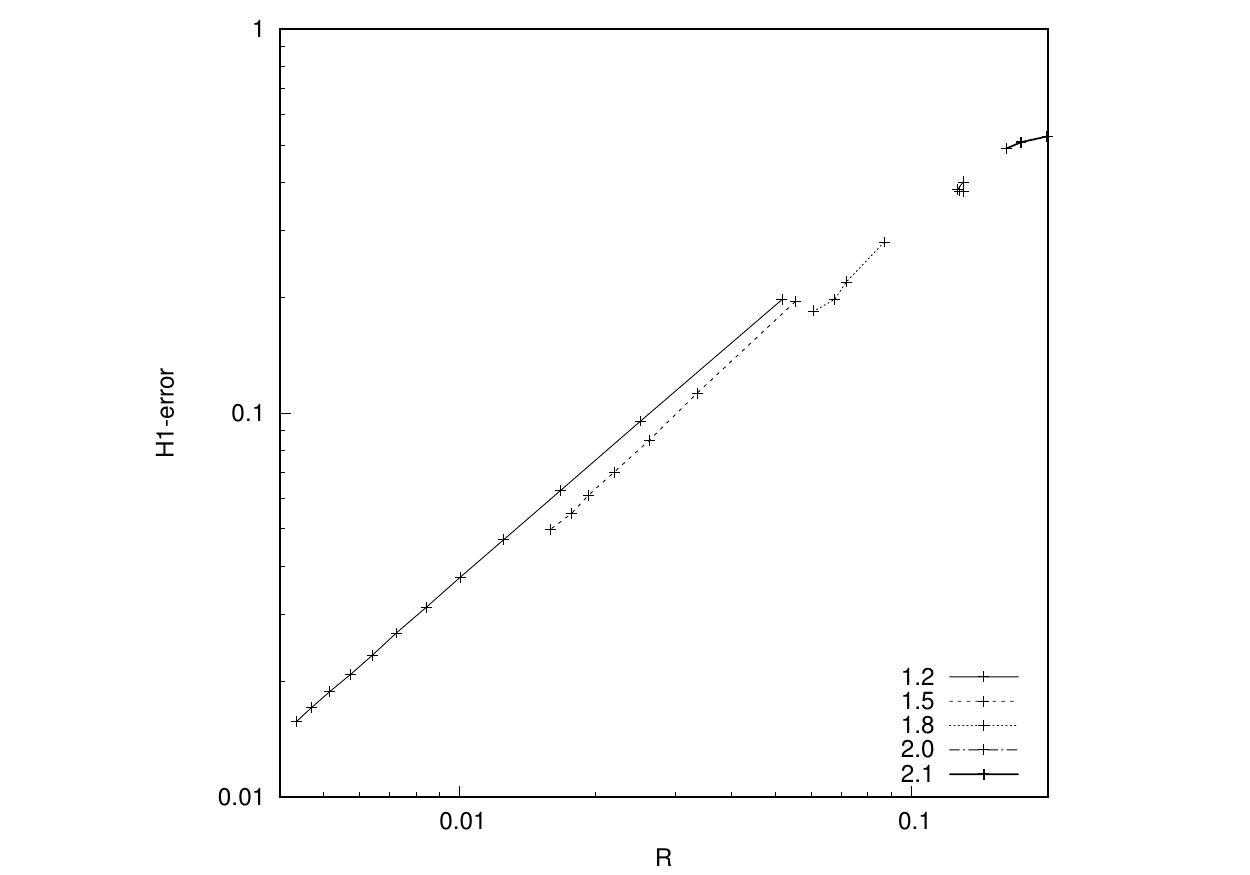} 
\caption{Behavior of the error $a_h^{(0)}(u-u_h,u-u_h)^{1/2}$  with
 respect to  $\alpha$.  The horizontal axis represents the maximum
 diameter (left) and the circumradius of the triangles (right), and the
 vertical  axis represents the error.  The legend
 indicates the value of $\alpha$ in each case.}
\end{figure}

\vspace{4mm}
\begin{figure}[hbtp]
\hspace{-6mm}
\includegraphics[width=6.9cm]{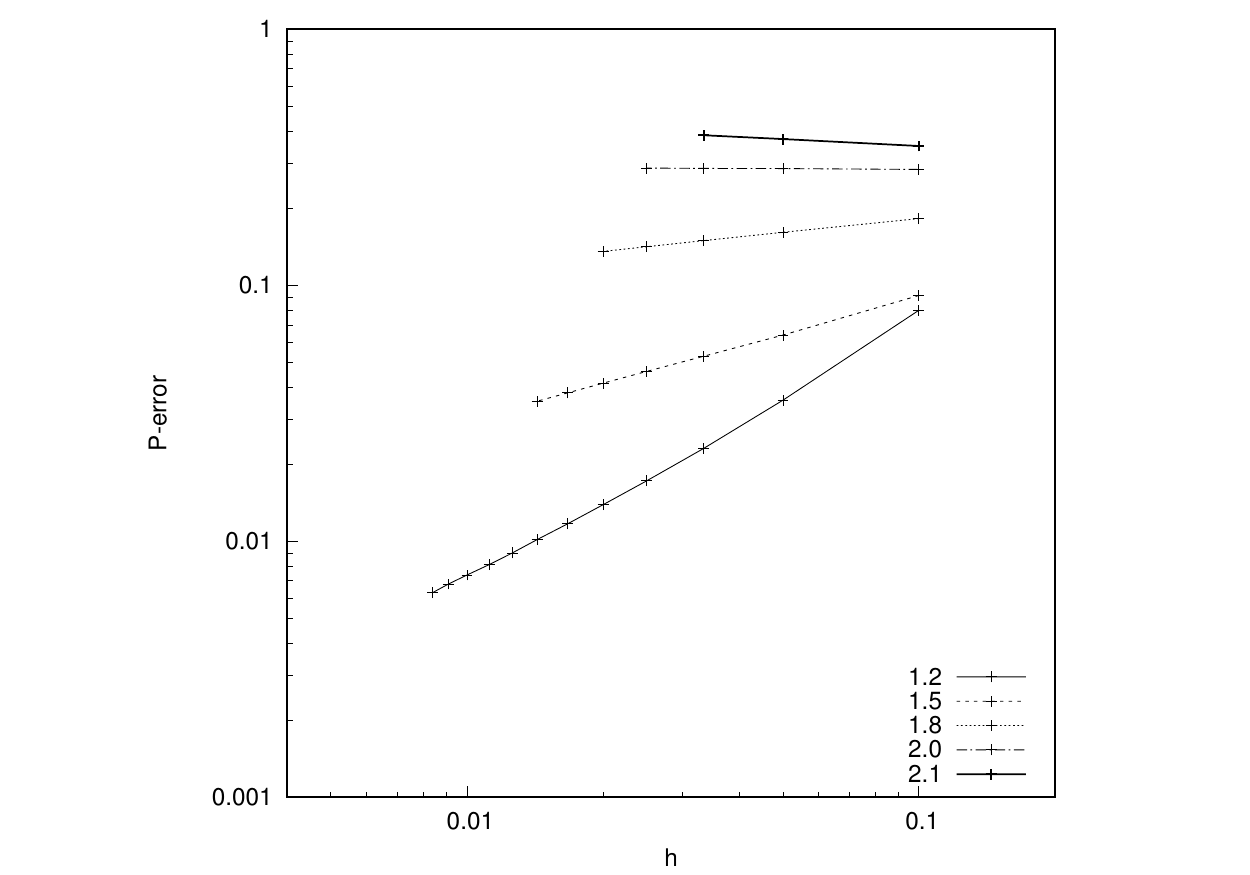} \hspace{-12mm}
\includegraphics[width=6.9cm]{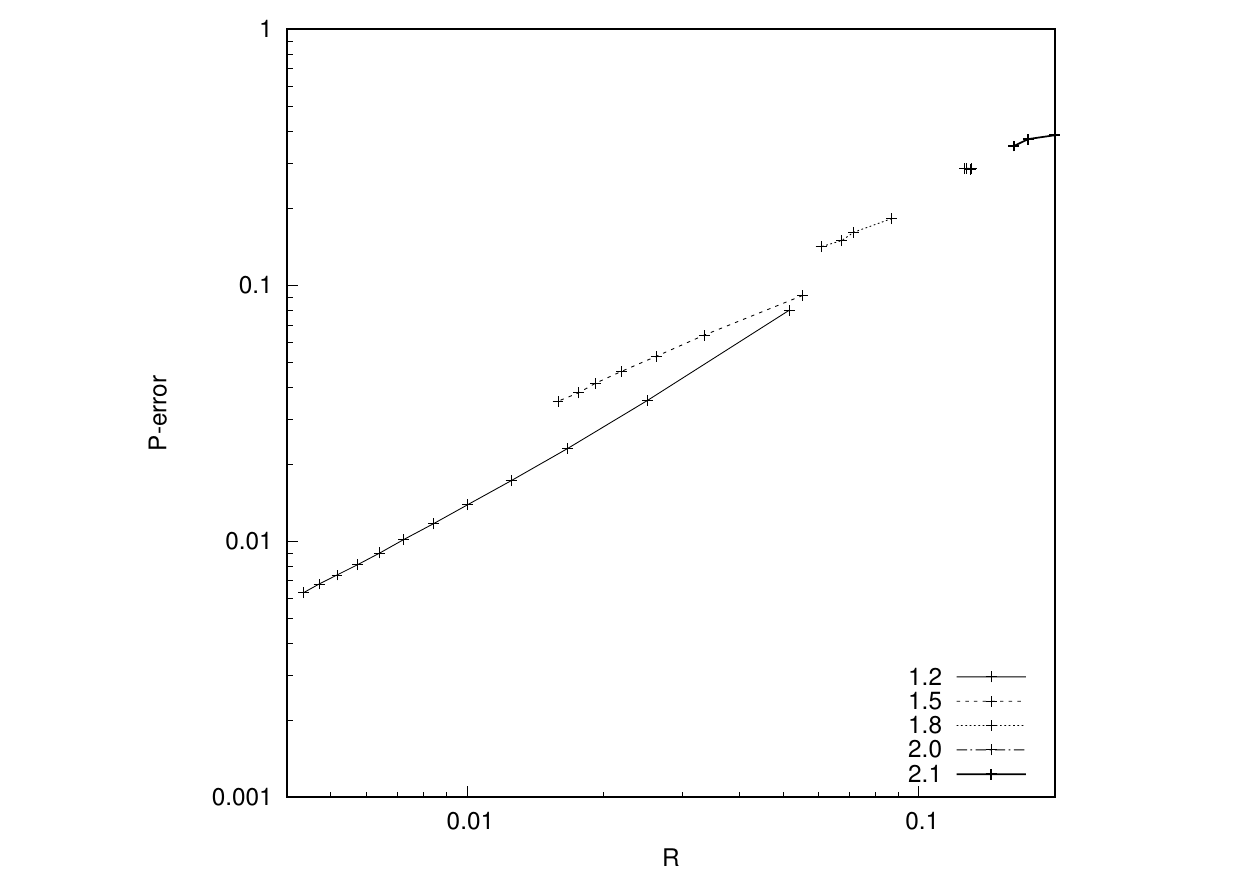} 
\caption{Behavior of the error
 $P_h^{new}(u-u_h,u-u_h)^{1/2}$  with respect to
 $\alpha$.  The horizontal axis represents the maximum diameter (left)
 and the circumradius of the triangles (right), and the vertical
 axis represents the error.  The legend  indicates the
 value of $\alpha$ in each case.}
\end{figure}

\begin{table}[hbtp]
\caption{Errors given by the new SIP-DG method on the mesh in
 Fig.~5 with  $\alpha = 1.5$.} \label{tab3}
\begin{tabular}[t]{|c|c|c|c|c|c|c|}
\hline
 $N$ & $h$ & $R$ & $L^2$-error & $H^1(\T_h)$-error & $P_h^{new}$-error & DG-error \\
\hline 
20 & 1.00e-1 & 5.528e-2 & 4.305e-3 & 1.954e-1 &
\revise{9.128e-2} & \revise{2.157e-1} \\
\hline
40 & 5.00e-2 & 3.350e-2 & 1.625e-3 & 1.128e-1 &
\revise{6.388e-2} & \revise{1.296e-1} \\
\hline 
60 & 3.34e-2 & 2.624e-2 & 9.828e-4 &  8.492e-2 &
\revise{5.284e-2} & \revise{1.000e-1} \\
\hline 
80 & 2.50e-2 & 2.198e-2 &  6.992e-4 & 7.028e-2 &
\revise{4.608e-2} & \revise{8.404e-2} \\
\hline 
100 & 2.00e-2 & 1.926e-2 & 5.432e-4 & 6.115e-2 &
\revise{4.149e-2} & \revise{7.390e-2} \\
\hline 
120 & 1.67e-2 & 1.765e-2 & 4.426e-4 & 5.471e-2 &
\revise{3.799e-2} & \revise{6.661e-2} \\
\hline 
140 & 1.43e-2 & 1.589e-2 & 3.735e-4 & 4.992e-2 &
\revise{3.527e-2} & \revise{6.112e-2} \\
\hline
\end{tabular}
\end{table}

\vspace{-10mm}
\begin{table}[hbtp]
\caption{Errors given by the new SIP-DG method on the mesh in Fig.~5
 with  $\alpha = 2.0$.} \label{tab4}
\begin{tabular}[t]{|c|c|c|c|c|c|c|}
\hline
 $N$ & $h$ & $R$ & $L^2$-error & $H^1(\T_h)$-error & $P_h^{new}$-error & DG-error \\
\hline 
20 & 1.00e-1 & 1.300e-1 & 2.395e-2 & 4.004e-1 &
\revise{2.836e-1} & \revise{4.906e-1} \\
\hline
40 & 5.00e-2 & 1.263e-1 & 2.296e-2 & 3.823e-1 &
\revise{2.857e-1} & \revise{4.772e-1} \\
\hline 
60 & 3.34e-2 & 1.273e-1 & 2.282e-2 & 3.792e-1 &
\revise{2.864e-1} & \revise{4.752e-1} \\
\hline 
80 & 2.50e-2 & 1.305e-1 & 2.283e-2 & 3.786e-1 &
\revise{2.870e-1} & \revise{ 4.751e-1} \\
\hline 
\end{tabular}
\end{table}

\section{Conclusion}
Using the general trace inequality \eqref{trace-in1}, we have presented
a new penalty term \eqref{newpenalty} and a new SIP-DG scheme
\eqref{newa}.  We have established its error estimates without imposing
the shape-regularity condition.  From numerical experiments, we have
confirmed that the new SIP-DG scheme is robust on anisotropic meshes.

The main idea in this paper is to use $\jumpterm$ instead of
the standard term 
$\frac{1}{h_f}$ in the penalty term.  Because this idea is very simple,
we expect that it can be extended in many directions and will be
used in many DG schemes.  In the following, we mention some directions
that are immediately apparent.
\begin{itemize}
 \item Application to many variants of DG methods.
 \item Application of the new SIP-DG scheme to non-proper meshes, and
the establishment of error estimations for these cases.
 \item Investigation of the possibility of applying the proposed
approach to hybridized DG methods.
\end{itemize}

\vspace{5mm}
\noindent
\textbf{Acknowledgements}
The authors were supported by JSPS KAKENHI Grant Numbers
17K14230, 20K14357, 16H03950, and 20H01820.
\revise{The authors would like to thank the anonymous referee for the
valuable comments. }

\end{document}